\newtheorem{lem}{Lemma}
\newtheorem{thm}{Theorem}
\newtheorem{Def}{Definition}
\newtheorem{prop}{Proposition}
\newcommand{\tr}[1]{\operatorname{Tr}\left(#1\right)}
\newcommand{\EE}[1]{\text{\Large ${\mathbb E}$}\left[ #1 \right]}
\newcommand{\bra}[1]{\langle #1 |}
\newcommand{\ket}[1]{| #1 \rangle}
\newcommand{\NN}{{\mathbb N}}
\newcommand{\RR}{{\mathbb R}}
\newcommand{\ba}{\boldsymbol{a}}
\newcommand{\bI}{\boldsymbol{I}}
\newcommand{\bM}{\boldsymbol{M}}
\newcommand{\bN}{{\boldsymbol{N}}}
\newcommand{\bNp}{{\bN+1}}
\title{\LARGE \bf
Stabilization of photon-number states via single-photon corrections: \\ a first convergence analysis under an ideal set-up
\thanks{H.\ B.\ Silveira was fully supported by CNPq (National Counsel for the Scientific and Technological Development), Ministry of Science and Technology, Brazil, as a visiting professor at Centre Automatique et Syst\`emes, Mines ParisTech. P.\ S.\ Pereira da Silva was partially supported by CNPq. P.\ Rouchon was partially supported by Projet Blanc ANR-2011-BS01-017-01 EMAQS.}
}
\author{
H.\ B.\ Silveira\thanks{H.\ B.\ Silveira is with Departamento de Automa\c{c}\~ao e Sistemas (DAS), Federal University of Santa Catarina
(UFSC), Florian\'opolis, Brazil {\tt\small
hector.silveira@ufsc.br}}
\and  
P.\ S.\ Pereira da Silva \thanks{P.\ S.\ Pereira da Silva is with Escola Polit\'ecnica -- PTC, University of S\~ao Paulo (USP), S\~ao Paulo, Brazil {\tt\small paulo@lac.usp.br}}
\and
 P.\ Rouchon\thanks{P.\ Rouchon is with Centre Automatique et Syst\`emes, Mines ParisTech, PSL Research University,  Paris, France {\tt\small pierre.rouchon@mines-paristech.fr}}
}
\begin{document}

\bibliographystyle{plain}

\maketitle
\thispagestyle{empty}
\pagestyle{empty}

\begin{abstract}
This paper presents a first mathematical convergence analysis of a  Fock states feedback stabilization scheme via single-photon corrections. This measurement-based feedback has been developed and experimentally tested in 2012  by  the cavity  quantum  electrodynamics group of Serge Haroche and Jean-Michel Raimond.  Here, we consider the  infinite-dimensional Markov model corresponding to the  ideal set-up where detection errors and  feedback delays have been disregarded. In this ideal context, we show that any goal Fock state can be  stabilized by a Lyapunov-based feedback  for any  initial quantum state belonging to the  dense subset of finite rank density operators with  support  in a  finite photon-number sub-space.  Closed-loop simulations  illustrate the performance of the  feedback law.
\end{abstract}

\section{INTRODUCTION}

In~\cite{zhouPRL2012}, a photon-number  states (Fock state) feedback stabilization scheme via single-photon corrections was described and experimentally tested. Such control problem is relevant for quantum information applications \cite{nielsen-chang-book,haroche-raimondBook06}. The quantum state $\rho$ corresponds to the density operator of a  microwave field stored inside a super-conducting cavity and described as a quantum harmonic oscillator. At each sample step $k \in \NN$, a probe  atom is launched inside the cavity. The measurement outcome $y_k$ detected by a sensor is the energy-state of this  probe atom after its interaction with the microwave field. Each probe  atom  is  considered as a two-level system:  either it is detected in the  lowest energy state $\ket{g}$, or the highest energy state  $\ket{e}$. Consequently, the measurement outcomes  corresponds to a discrete-valued output $y_k$  with only  two distinct possibilities:  $g$ or $e$. Similarly,   the control  inputs  $u_k$ are also discrete-valued with 3 distinct  possibilities: $-1,0,+1$. The open-loop   value $u_k=0$  corresponds to a dispersive atom/field interaction: it  achieves in fact  a  Quantum Non-Demolition measurement of Fock states~\cite{brune-et-al:PhRevA92}. The two other   values $u_k=\pm 1$ correspond to resonant atom/field interactions where the probe atom and the field  exchange energy quanta: these values  achieve single-photon corrections.

Although the feedback law proposed and implemented in \cite{zhouPRL2012} considered imperfect detections on $y_k$ and delays in the control, here we focus on an ideal-set up, that is, detection errors and control delays have been disregarded. Theorem~\ref{thm:IdealConvergenceGeneral} shows that, by adding an arbitrarily small term to the Lyapunov function used in \cite{zhouPRL2012}, one ensures almost sure global stabilization of any goal Fock state for the closed-loop ideal set-up. This is achieved by relying on an infinite-dimensional Markov model of the ideal set-up that takes into account the back-action of the measurement outcome $y_k$ on the quantum state $\rho_{k+1}$.

Loosely speaking, in \cite{zhouPRL2012}, the control value $u_k$ at each sampling step $k$ was chosen so as to minimize the conditional expectation of the Lyapunov function $V(\rho_k)=\tr{d(\bN) \rho_k}$, where $\bN$ is the photon-number operator, $d(n) = (n-\overline{n})^2$ and $\overline{\rho}=\ket{\overline{n}}\bra{\overline{n}}$ is the goal Fock state. However, in closed-loop, the difference between such $V$ and its conditional expectation is not strictly positive: such  $V$ does not become a strict Lyapunov function in closed-loop  and additional  arguments  have to be considered to prove convergence. These additional arguments are related to Lasalle invariance. They are well established in a smooth context where the control $u$ is a smooth function of the state $\rho$. This cannot be the case here since $u$ is a discrete-valued control.    In order to overcome such technical difficulties,  we propose, similarly to \cite{AminiSDSMR2013A}, to add the arbitrarily small term $- \epsilon \sum_{n = 0}^{\infty} (\bra{n} \rho_k \ket{n})^2$ to $V(\rho_k)$, where $\epsilon > 0$. This slightly  modified control-Lyapunov function   becomes then  a strict-Lyapunov function in closed-loop that simplifies notably  the convergence analysis.   Moreover,  the  developed convergence analysis is done in  the infinite-dimensional setting in the following sense:  we show that,  for any  initial density operator $\rho_0$ with  a finite photon-number support  ($\rho_0 \ket{n} = 0$ for $n$ large enough), the closed-loop trajectory  $k\mapsto \rho_k$ remains also  with a finite photon-number support with a uniform bound on the maximum  photon-number. This  almost finite-dimensional behavior simplifies the convergence analysis despite the fact that such condition on $\rho_0$ is met on a dense subset of density operators (Hilbert-Schmidt topology on the Banach space of Hilbert-Schmidt self-adjoint operators).

The paper is organized as follows. Section~\ref{idealmodel} presents the ideal Markov model of the experimental set-up of the controlled microwave super-conducting cavity reported in~\cite{zhouPRL2012} and precisely formulates the Fock state stabilization problem here treated (see Definition~\ref{controlproblem}). Section~\ref{controlsolution} establishes the proposed solution to the control problem in two distinct parts. Firstly, Section~\ref{diagonalcase} considers the case where the initial condition $\rho_0$ is a diagonal density operator (see Theorem~\ref{thm:IdealConvergence}). Only the main ideas of the convergence proof are outlined. The technical details are given in Section~\ref{diagproof}. Afterwards, in Section~\ref{generalcase}, the main result of the paper is presented: the general solution is obtained from Theorem~\ref{thm:IdealConvergence} for $\rho_0$ belonging to a dense subset (see Theorem~\ref{thm:IdealConvergenceGeneral}). The simulation results are exhibited in Section~\ref{simus}. The proof of some intermediate results and computations required in Sections~\ref{controlsolution}~and~\ref{diagproof} are presented in Appendices~\ref{proof-properties-pk}--\ref{proofDelta}. Finally, the concluding remarks are given in Section~\ref{conclusions}.

\section{IDEAL  MARKOV MODEL}\label{idealmodel}
Denote by $\mathcal{H}$ the separable complex Hilbert space $L_2(\mathbb{C})$ with orthonormal basis $\{\ket n, n \in \mathbb{N}\}$ of Fock states (photon-number). Hence, $\mathcal{H} = \{ \sum_{n \in \NN} \psi_n \ket n,$ $(\psi_0, \psi_1, \dots) \in l_2(\mathbb{C})\}$. Let $\mathbb{D}$ be the set of all density operators on $\mathcal{H}$, that is, the set of  trace-class, self-adjoint, non-negative operators on $\mathcal{H}$ with unit trace. The sample step, corresponding to a sampling period around $100 \mu$s, is indexed by $k \in \NN=\{0, 1, 2, \dots \}$,  $u_k\in\{-1,0,1\}$ is the control,  $\rho_k \in \mathbb{D}$ the quantum  state and $y_k\in\{g,e\}$ the measurement outcome. The ideal Markov model of the controlled microwave super-conducting cavity used in~\cite{zhouPRL2012} is given by:
\begin{equation}\label{eq:IdealMarkovModel}
  \rho_{k+1}=
\left\{
  \begin{aligned}
  &\rho_{k+1}^g = \tfrac{\bM_g(u_k)\rho_k \bM_g^\dag(u_k)}{\tr{\bM_g(u_k)\rho_k \bM_g^\dag(u_k)}}  \mbox{ when } y_k=g,\\
  &\rho_{k+1}^e = \tfrac{\bM_e(u_k)\rho_k \bM_e^\dag(u_k) }{\tr{\bM_e(u_k)\rho_k \bM_e^\dag(u_k)}} \; \mbox{ when } y_k=e,
    \end{aligned}
  \right.
\end{equation}
where the measurements outcomes $y_k=g$ and $y_k=e$ occur with probabilities\footnote{As usual in quantum physics, it is here assumed that the measurement outcome $y_k=y$ cannot occur when $\tr{\bM_y(u_k)\rho_k \bM_y^\dag(u_k)}=0$, for $y=g,e$.}  $p_{g,k}=\tr{\bM_g(u_k)\rho_k \bM_g^\dag(u_k)}$ and $p_{e,k}=\tr{\bM_e(u_k)\rho_k \bM_e^\dag(u_k)}=1-p_{g,k}$, respectively,  $u_k=0$  corresponds to a dispersive interaction of the launched atom with the cavity field (Quantum Non-Demolition measurement of photons)
\begin{equation}\label{eq:M0}
  \bM_g(0) = \cos\left( \tfrac{\phi_0 \bN +\phi_R}{2}\right), \; \bM_e(0) \hspace{-2pt} = \hspace{-1pt} \sin\left(\hspace{-2pt} \tfrac{\phi_0 \bN +\phi_R}{2} \hspace{-2pt} \right) \hspace{-1pt},
\end{equation}
when $u_k=+1$ the atom enters the cavity in the state $\ket{e}$ with a resonant interaction with the cavity field
\begin{equation}\label{eq:Mp}
   \bM_g(+1)= \tfrac{\sin\left(\tfrac{\theta_0}{2}\sqrt{\bN}\right)}{\sqrt{\bN}} \ba^\dag, \;
    \bM_e(+1) \hspace{-2pt} = \hspace{-1pt} \cos\left(\hspace{-2pt} \tfrac{\theta_0}{2}\sqrt{\bNp} \hspace{-2pt} \right) \hspace{-1pt},
\end{equation}
when $u_k=-1$ it enters in $\ket{g}$ with a resonant interaction
\begin{equation}\label{eq:Mm}
    \bM_g(-1) = \cos\left(\tfrac{\theta_0}{2}\sqrt{\bN}\right), \;
       \bM_e(-1)= \ba \tfrac{\sin\left(\tfrac{\theta_0}{2}\sqrt{\bN}\right)}{\sqrt{\bN}},
\end{equation}
and $\phi_0, \phi_R, \theta_0 \in \RR$ are adjustable control parameters.
For each $u \in \{-1,0,1\}$, $\bM_g(u)$ and $\bM_e(u)$ are (linear) operators on $\mathcal{H}$ defined in the obvious way\footnote{For instance, $\bM_g(+1)\ket{n} = \left(\sin(\tfrac{\theta_0}{2}\sqrt{\bN})/\sqrt{\bN}\right)\sqrt{n+1}\ket{n+1} = \sin(\tfrac{\theta_0}{2} \sqrt{n+1}) \ket{n+1}$. In order for the definition of $\bM_e(-1)$ to be consistent, it is assumed $\sin(0)/0 = 1$.} according to the definitions in Appendix~\ref{basicoperators}. They are indeed well-defined operators on $\mathcal{H}$, despite the fact that $\ba$ and $\ba^\dag$ are unbounded operators. It is clear that $\bM_g(u), \bM_e(u)$ are bounded operators on $\mathcal{H}$ with $\bM_g^\dag(u)\bM_g(u) + \bM_e^\dag(u)\bM_e(u) = \bI$ (identity operator),
$\bM_e(-1)=\bM_g^\dag(+1)=\ba \sin (\tfrac{\theta_0}{2}\sqrt{\bN})/\sqrt{\bN}$, and $\bM_g(-1),\bM_g(0), \bM_e(0),\bM_e(+1)$ are self-adjoint.
It is easy to see that if the initial condition $\rho_0$ is a density operator then, for all realizations of the ideal Markov process~\eqref{eq:IdealMarkovModel}--\eqref{eq:Mm}, $\rho_k$ is a density operator for $k \in \mathbb{N}$.

Notice that $\overline{\rho}=\ket{\overline{n}}\bra{\overline{n}}$ is a steady state of the Markov process \eqref{eq:IdealMarkovModel}--\eqref{eq:Mm} with $u_k=0$, where $\overline{n} \in \NN$ is arbitrary.
The control problem here treated is given as follows:

\begin{Def}\label{controlproblem}
For the ideal Markov process~\eqref{eq:IdealMarkovModel}--\eqref{eq:Mm}, the control problem is to find a feedback law $u_k=f(\rho_k)$ such that, given an initial condition $\rho_0$ and $\overline{n} \in \NN$, the closed-loop trajectory $\rho_k$ converges almost surely towards the goal Fock state
$\overline{\rho}=\ket{\overline{n}}\bra{\overline{n}}$ as $k \rightarrow \infty$.
\end{Def}

The almost sure convergence above is with respect to the probabilities amplitudes $P_n(\rho) = \tr{\ket{n}\bra{n} \rho} = \bra{n} \rho \ket{n}$ of $\rho$, that is, $\lim_{k \to \infty} P_n(\rho_k) = P_n(\overline{\rho})$ for each $n \in \NN$.
In other words, $\lim_{k \to \infty} P_{\overline{n}}(\rho_k) = 1$ and $\lim_{k \to \infty} P_{n}(\rho_k) = 0$ when $n \neq \overline{n}$. The solution proposed in this paper for the control problem above is developed in the next section.

\section{STABILIZATION OF FOCK STATES}\label{controlsolution}

Given any operator $A$:~$\mathcal{H} \rightarrow \mathcal{H}$, let $A_{mn} = \bra{m} A \ket{n}$ for $m, n \in \NN$. Hence, $A_{nn}$ is the $n$-th diagonal element of $A$, while $A_{mn}$ with $m \neq n$ correspond to its ``off-diagonal" elements. One says that the operator $A$ is \emph{diagonal} when $A_{mn} = 0$ for all $m,n \in \NN$ with $m \neq n$.
One shall begin by solving the control problem given in Definition~\ref{controlproblem} in the particular case where the initial condition $\rho_0$ is diagonal (see Theorem~\ref{thm:IdealConvergence} in Section~\ref{diagonalcase}). Afterwards, in Section~\ref{generalcase}, the  solution to the general  non-commutative case  is presented (see Theorem~\ref{thm:IdealConvergenceGeneral}): its solution  relies essentially  on the diagonal case.

\subsection{Diagonal case}\label{diagonalcase}

For each $n^* \in \mathbb{N}$, define\footnote{Note that if $\rho=\ket{n} \bra{n}$ for some $n \in \NN$, then $\rho \in D_{n}$.}
\[
  D_{n^*} = \left\{ \rho \in \mathbb{D}  ~|~  \rho \mbox{ is diagonal and } \rho\ket{n}=0, \forall n > n^* \right\}.
\]
Consider the set $D_* = \bigcup_{n^* \in \NN} D_{n^*} \subset \mathbb{D}$.
 Note that $D_{n^*} \subset D_{n^*+1}$, and that each element $\rho$ of $D_*$ is ``finite
 dimensional'' in the following sense: $\rho \in \mathbb{D}$ is in $D_{n^*}$ if and only if $\rho = \sum_{n=0}^{n^*} \rho_{nn} \ket{n} \bra{n}$, and $\rho \in D_{n^*}$ may be considered as an operator from $\mathcal{H}$ to the finite-dimensional space $\mathcal{H}_{n^*}=\mbox{span}\{\ket{0}, \dots, \ket{n^*}\}$, or as a density matrix on $\mathcal{H}_{n^*}$. One defines the
 functions $n_{min}$:~$D_* \rightarrow \NN$,
 $n_{max}$:~$D_* \rightarrow \NN$
 and $n_{length}$:~$D_* \rightarrow \NN$ respectively by:
 \begin {itemize}
 \item $n_{min}(\rho)$ is the smallest $n \in \mathbb{N}$ such that $\rho\ket{n} \neq 0$;
 \item $n_{max}(\rho)$ is the greatest $n \in \mathbb{N}$ such that $\rho\ket{n} \neq 0$;
 \item $n_{length} (\rho) = n_{max}(\rho) - n_{min}(\rho)$.
 \end{itemize}
It is clear that, given $\rho \in D_*$, one has $\rho \in D_{n^*}$ if and only if $n_{max} (\rho) \leq n^*$.
The next result exhibits the properties of the state $\rho_k$ of \eqref{eq:IdealMarkovModel}--\eqref{eq:Mm} with respect to these functions.

\begin{prop}\label{properties-pk}
For every realization of the ideal Markov process~\eqref{eq:IdealMarkovModel}--\eqref{eq:Mm}
with initial condition $\rho_0 \in D_*$, one has that $\rho_k \in D_*$ for all
$k \in \NN$ with:
 \begin{itemize}
  \item If $u_k = 0$ or $u_k=-1$, then $n_{max} (\rho_{k+1}) \leq n_{max} (\rho_{k})$ and $n_{length} (\rho_{k+1}) \leq n_{length} (\rho_{k})$; \smallskip
  \item If $u_k = +1$, then $n_{max} (\rho_{k+1}) \leq n_{max} (\rho_{k}) +1$ and $n_{length} (\rho_{k+1}) \leq n_{length} (\rho_{k})$.
  \end{itemize}
\end{prop}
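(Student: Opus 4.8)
The plan is to argue by induction on $k$: since $\rho_0 \in D_*$ by hypothesis, it suffices to prove the one-step statement that if $\rho_k \in D_*$ (so $\rho_k$ is diagonal with finite support) then, for whichever outcome $y_k \in \{g,e\}$ actually occurs, $\rho_{k+1} = \rho_{k+1}^{y_k}$ again lies in $D_*$ and satisfies the announced inequalities relating $n_{max}(\rho_{k+1}), n_{length}(\rho_{k+1})$ to $n_{max}(\rho_k), n_{length}(\rho_k)$ with the stated dependence on $u_k$. (That $\rho_{k+1}$ is a density operator is already known.)

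The key structural observation, verified by direct computation from \eqref{eq:M0}--\eqref{eq:Mm} together with $\ba\ket n = \sqrt n\,\ket{n-1}$ and $\ba^\dag\ket n = \sqrt{n+1}\,\ket{n+1}$, is that each Kraus operator sends every Fock state to a scalar multiple of a \emph{single} Fock state, with a shift \emph{independent of $n$}: explicitly $\bM_g(0)\ket n = \cos(\tfrac{\phi_0 n + \phi_R}{2})\ket n$, $\bM_e(0)\ket n = \sin(\tfrac{\phi_0 n + \phi_R}{2})\ket n$; $\bM_g(-1)\ket n = \cos(\tfrac{\theta_0}{2}\sqrt n)\ket n$, $\bM_e(-1)\ket n = \sin(\tfrac{\theta_0}{2}\sqrt n)\ket{n-1}$ for $n \geq 1$ with $\bM_e(-1)\ket 0 = 0$; $\bM_g(+1)\ket n = \sin(\tfrac{\theta_0}{2}\sqrt{n+1})\ket{n+1}$, $\bM_e(+1)\ket n = \cos(\tfrac{\theta_0}{2}\sqrt{n+1})\ket n$. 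Writing $\bM_y(u)\ket n = c_{y,u,n}\,\ket{n+s(y,u)}$, the shift $s(y,u) \in \{-1,0,+1\}$ equals $+1$ exactly for $(y,u) = (g,+1)$, equals $-1$ exactly for $(y,u) = (e,-1)$, and equals $0$ otherwise.

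With this in hand, let $\rho_k = \sum_n (\rho_k)_{nn}\ket n\bra n \in D_*$, fix the realized $(y_k,u_k)$, and set $s = s(y_k,u_k)$. Then, before normalization,
\[
  \bM_{y_k}(u_k)\,\rho_k\,\bM_{y_k}^\dag(u_k) = \sum_n (\rho_k)_{nn}\,|c_{y_k,u_k,n}|^2\,\ket{n+s}\bra{n+s},
\]
which is diagonal and finitely supported, with support $\mathcal S + s$, where $\mathcal S = \{\,n : (\rho_k)_{nn} > 0 \text{ and } c_{y_k,u_k,n} \neq 0\,\}$ is a subset of $\{n : (\rho_k)_{nn} \neq 0\}$ — a set whose maximum is $n_{max}(\rho_k)$ and minimum is $n_{min}(\rho_k)$ since $\rho_k$ is diagonal. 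As the outcome $y_k$ occurs with positive probability, $\tr{\bM_{y_k}(u_k)\rho_k\bM_{y_k}^\dag(u_k)} = \sum_n (\rho_k)_{nn}|c_{y_k,u_k,n}|^2 > 0$, so $\mathcal S \neq \emptyset$ and dividing by this trace leaves the support unchanged; hence $\rho_{k+1}$ is diagonal with finite support, i.e.\ $\rho_{k+1} \in D_*$. Moreover $n_{max}(\rho_{k+1}) = \max\mathcal S + s \leq n_{max}(\rho_k) + s$ and $n_{min}(\rho_{k+1}) = \min\mathcal S + s \geq n_{min}(\rho_k) + s$, whence $n_{length}(\rho_{k+1}) = \max\mathcal S - \min\mathcal S \leq n_{length}(\rho_k)$ for every admissible $u_k$. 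Substituting $s \leq 0$ for $u_k \in \{0,-1\}$ and $s \leq +1$ for $u_k = +1$ gives the two bullet points, which closes the induction.

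The only points needing a little care are the boundary behaviour of $\bM_e(-1)$ at $n=0$ (where $\ba\ket 0 = 0$, consistently with the convention $\sin(0)/0 = 1$ used in its definition) and the remark that discarding the indices with $c_{y_k,u_k,n} = 0$ — possibly including extremal ones — can only shrink the interval $[\min\mathcal S, \max\mathcal S]$ and so never violates the inequalities. The genuinely essential ingredient is the $n$-independence of the shift $s(y,u)$: it is exactly what forces $n_{length}$ to be non-increasing along every realization, and it is the reason the bound on $n_{max}$ can degrade — and only by one — solely when $u_k = +1$.
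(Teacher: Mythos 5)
Your proposal is correct and follows essentially the same route as the paper: a direct computation of the action of each Kraus operator on Fock states (equivalently, of $\bM_y(u)\rho\bM_y^\dag(u)$ for diagonal $\rho$), showing the result is again diagonal with finitely many nonzero entries and support shifted by $0$, $-1$, or $+1$ depending on $(y,u)$, followed by induction on $k$. Your bookkeeping via the $n$-independent shift $s(y,u)$, the explicit remark that normalization is legitimate because the realized outcome has positive probability, and the note that vanishing coefficients can only shrink the support are just tidier statements of what the paper's equations \eqref{zg}--\eqref{me} encode.
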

\begin{proof}
See Appendix~\ref{proof-properties-pk}.
\end{proof}

Take a goal photon-number $\overline{n} \in \NN$.
As in~\cite{AminiSDSMR2013A}, consider the following Lyapunov function $V_\epsilon$:~$D_* \rightarrow \RR$ defined as
\begin{equation}\label{lyapfundef}
V_\epsilon(\rho)= \tr{ d(\bN) \rho} - \epsilon \sum_{n\in\NN} \rho_{nn}^2,	\quad \mbox{for } \rho \in D_*,
\end{equation}
where $\epsilon >0$ is a real number and $d(n)=(n-\overline{n})^2$ as defined in~\cite{zhouPRL2012}.
The feedback law $u$:~$D_* \rightarrow \{-1, 0 ,1\}$ is given by
\begin{equation}\label{eq:feedback}
  u =f(\rho) \triangleq \underset{\upsilon\in\{-1,0,1\}}{\text{Argmin }}  \EE{V_\epsilon(\rho_{k+1})~|~\rho_k=\rho, u_k=\upsilon}.
\end{equation}
Note that for each $\rho \in D_*$ and $n^* \geq n_{max}(\rho)$, $d(\bN) \rho$ in \eqref{lyapfundef} is a well-defined self-adjoint, non-negative, trace-class operator on $\mathcal{H}$, by considering $d(\bN)$ as an operator on $\mathcal{H}_{n^*}$ and $\rho$ as an operator from $\mathcal{H}$ to $\mathcal{H}_{n^*}$.  Indeed, $d(\bN) \rho = \sum_{n=0}^{n^*} \rho_{nn} (n-\overline{n})^2 \ket{n} \bra{n}$. Thus,  \eqref{lyapfundef} is well-defined. Moreover, since $\mathcal{H}_{n^*}$ is invariant under $\rho \in  D_*$ for $n^* \geq n_{max}(\rho)$, it is clear that $\tr{ d(\bN) \rho} = \mbox{Tr}_{\mathcal{H}_{n^*}}(d(\bN) \rho)$, where on the right-hand side one considers $\rho$ as an operator on the finite-dimensional space $\mathcal{H}_{n^*}$ and the trace is taken over $\mathcal{H}_{n^*}$.

We have the following convergence result when $\rho_0 \in D_*$:

\begin{thm}\label{thm:IdealConvergence}
Let $\overline{n} \in \NN$ and $\epsilon > 0$. In \eqref{eq:M0}--\eqref{eq:Mm}, assume that $\phi_0/ \pi$ and $(\theta_0/\pi)^2$ are irrational numbers, and take $\phi_R=\pi/2 - \overline{n} \phi_0$. Consider the closed-loop Markov process~\eqref{eq:IdealMarkovModel}--\eqref{eq:Mm} with $u_k=f(\rho_k)$, where the feedback law $f$  is as in~\eqref{eq:feedback}. Then, given any initial condition $\rho_0 \in D_*$, one has that $\rho_k$ converges almost surely towards $\overline{\rho}=\ket{\overline {n}}\bra{\overline{n}}$ as $k \rightarrow \infty$.
\end{thm}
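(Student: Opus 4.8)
The plan is to run a stochastic Lyapunov / LaSalle argument built around $V_\epsilon$, in four steps.

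\emph{(i) $V_\epsilon$ is a super-martingale.} For the dispersive value $u_k=0$, both $\bM_g(0)$ and $\bM_e(0)$ are diagonal, so the diagonal entries $\rho_{nn,k}$ form a martingale; hence $\EE{\tr{d(\bN)\rho_{k+1}}\mid\rho_k,\,u_k=0}=\tr{d(\bN)\rho_k}$, while the strict convexity of $x\mapsto x^2$ gives the exact identity $\EE{V_\epsilon(\rho_{k+1})\mid\rho_k,\,u_k=0}=V_\epsilon(\rho_k)-Q(\rho_k)$, where, writing $c_n=\cos^2\!\big(\tfrac{\phi_0 n+\phi_R}{2}\big)$, $p_g=\sum_n c_n\rho_{nn}$ and $p_e=1-p_g$, the defect is $Q(\rho):=\epsilon\sum_n\rho_{nn}^2\,(c_n-p_g)^2/(p_g p_e)\geq 0$. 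Since $f$ minimizes the conditional expectation over $\{-1,0,1\}$ and $u=0$ is always admissible, $\EE{V_\epsilon(\rho_{k+1})\mid\rho_k}\leq V_\epsilon(\rho_k)-Q(\rho_k)\leq V_\epsilon(\rho_k)$; as $V_\epsilon\geq-\epsilon$ on $D_*$, the process $V_\epsilon(\rho_k)$ is a super-martingale bounded below, hence converges a.s., and telescoping yields $\sum_k\EE{Q(\rho_k)}\leq V_\epsilon(\rho_0)+\epsilon<\infty$, so $Q(\rho_k)\to 0$ a.s.\ and $\sup_k\EE{\tr{d(\bN)\rho_k}}\leq V_\epsilon(\rho_0)+\epsilon$. (It is here that the extra $\epsilon$-term is essential: $Q$ carries the factor $\epsilon$ and without it the increment is identically zero.)

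\emph{(ii) Reduction to a compact set.} By Proposition~\ref{properties-pk}, $\rho_k\in D_*$ and $n_{length}(\rho_k)$ is non-increasing, so $\rho_k$ is supported on an interval of length $\leq L:=n_{length}(\rho_0)$; consequently $n_{max}(\rho_k)>\overline n+L$ would force $\tr{d(\bN)\rho_k}\geq(n_{max}(\rho_k)-L-\overline n)^2$, which, together with the a.s.\ finiteness of $\lim_k V_\epsilon(\rho_k)$, rules out $n_{max}(\rho_k)\to\infty$. Turning this into a genuine deterministic bound $\rho_k\in D_{\Nmax}$ for all $k$, with $\Nmax$ depending only on $\rho_0,\overline n,\theta_0,\epsilon$, is the step I expect to be the main obstacle: because $\sin^2\!\big(\tfrac{\theta_0}{2}\sqrt{n+1}\big)$ is not bounded away from $0$, the value $u_k=+1$ cannot be excluded for a high-lying support by a mere sign argument, and a quantitative estimate (deferred to Section~\ref{diagproof}) is needed. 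Granting it, the whole trajectory lives in the compact simplex $D_{\Nmax}$.

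\emph{(iii) The limit set lies among the Fock states.} With $\phi_R=\pi/2-\overline n\phi_0$ one has $c_n=\tfrac12\big(1-\sin(\phi_0(n-\overline n))\big)$, which lies in $(0,1)$ for every $n$ and is injective in $n$ because $\phi_0/\pi$ is irrational. Hence $p_g p_e>0$ on $D_{\Nmax}$, $Q$ is continuous there, and $Q(\rho)=0$ on $D_{\Nmax}$ if and only if $\rho=\proj n$ for some $n\leq\Nmax$. Since $Q(\rho_k)\to 0$ a.s.\ and every limit point of $(\rho_k)$ lies in $D_{\Nmax}$, the $\omega$-limit set of $(\rho_k)$ is a.s.\ contained in $\{\proj n:0\leq n\leq\Nmax\}$.

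\emph{(iv) Discarding the wrong Fock states.} Evaluating the conditional expectation in~\eqref{eq:feedback}, one finds that $\EE{V_\epsilon(\rho_{k+1})\mid\rho_k=\proj{n_0},\,u_k=u}-V_\epsilon(\proj{n_0})$ equals $0$, $-\sin^2\!\big(\tfrac{\theta_0}{2}\sqrt{n_0}\big)\big(2(n_0-\overline n)-1\big)$, and $\sin^2\!\big(\tfrac{\theta_0}{2}\sqrt{n_0+1}\big)\big(2(n_0-\overline n)+1\big)$ for $u=0,-1,+1$; for $n_0\neq\overline n$ the strict minimizer is the ``restoring'' value ($u=-1$ if $n_0>\overline n$, $u=+1$ if $n_0<\overline n$), so by continuity of the three conditional expectations on $D_{\Nmax}$, $f$ equals it on a neighborhood of $\proj{n_0}$. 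Therefore there exist $\delta,\eta,c>0$, uniform over the finitely many $n_0\in\{0,\dots,\Nmax\}\setminus\{\overline n\}$, such that whenever $\rho_k$ is within $\delta$ of some such $\proj{n_0}$, with conditional probability at least $\eta$ the step moves one quantum toward $\overline n$ and $V_\epsilon(\rho_{k+1})\leq V_\epsilon(\rho_k)-c$ (the transition probability involved, $\sin^2\!\big(\tfrac{\theta_0}{2}\sqrt{n_0}\big)$ or $\sin^2\!\big(\tfrac{\theta_0}{2}\sqrt{n_0+1}\big)$, is positive because $(\theta_0/\pi)^2$ is irrational). A conditional Borel--Cantelli argument at the successive visit times then shows that visiting such a $\delta$-ball infinitely often would entail infinitely many downward jumps of size $\geq c$ in $V_\epsilon(\rho_k)$, contradicting its a.s.\ convergence; hence a.s.\ the $\omega$-limit set avoids every $\proj{n_0}$ with $n_0\neq\overline n$. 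Combining this with (iii) and the non-emptiness of the $\omega$-limit set (compactness of $D_{\Nmax}$) forces $\rho_k\to\proj{\overline n}$ a.s., i.e.\ $\lim_{k\to\infty}P_n(\rho_k)=P_n(\overline\rho)$ for every $n\in\NN$.
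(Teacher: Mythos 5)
Your steps (i), (iii) and (iv) are essentially sound and your formulas check out against the paper's computations (your defect at $u=0$ is exactly $\epsilon Q_W(\rho,0)$ in the paper's notation, and your Fock-state increments agree with Lemma~\ref{Lem2}). The genuine gap is exactly where you flag it and then write ``Granting it'': the confinement of the closed-loop trajectory to a fixed finite-dimensional set $D_{m_0}$. This is not a deferrable technicality in this paper --- it is the paper's entire First Step, and your remaining argument leans on it (compactness of the state space, continuity and a strictly positive infimum of the defect off a neighborhood of the Fock states, uniformity of $\delta,\eta,c$ over finitely many $n_0$). The paper proves it via Proposition~\ref{p1} and Lemma~\ref{Lem1}: since the support of $\rho_k$ is an interval of length at most $n_0=n_{length}(\rho_0)$, Lemma~\ref{Lem1} (an almost-period/windowing argument using only $\theta_0\neq 0$) produces an arbitrarily large $\overline{N}$ such that $\sin^2(\tfrac{\theta_0}{2}\sqrt{n})\geq \tfrac12-a$ on a window of $n_0+r_0+1$ consecutive integers; taking $m_0=\overline{N}+n_0+r_0$ and choosing $\overline{N}$ large enough that $[2(\overline{N}-\overline{n})-1](\tfrac12-a)>2\epsilon$, one gets $Q_V(\rho,-1)>2\epsilon$ while $Q_V(\rho,+1)\leq 0$ and $|Q_W|\leq 1$ whenever $n_{max}(\rho)=m_0$, so the Argmin feedback is forced to choose $u=-1$ at the boundary and $n_{max}(\rho_k)$ can never exceed $m_0$. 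You correctly diagnose why a naive sign argument fails ($\sin^2(\tfrac{\theta_0}{2}\sqrt{n+1})$ is not bounded below), but you do not supply the quantitative estimate that overcomes it, so as written the proof is incomplete. (Your pathwise observation that supermartingale convergence rules out $n_{max}(\rho_k)\to\infty$ gives only a random bound; one could try to salvage the argument by decomposing over the events $\{\sup_k n_{max}(\rho_k)\leq M\}$, but you neither do this nor prove the deterministic bound you say is needed.)

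A secondary remark on the endgame: you lower-bound the Lyapunov decrease only by its $u=0$ value, whose zero set is all Fock states, and therefore need the extra neighborhood/conditional Borel--Cantelli argument in (iv) to discard $\proj{n_0}$, $n_0\neq\overline{n}$. The paper instead works with the full defect $Q_{V_\epsilon}(\rho,f(\rho))=\max_u Q_{V_\epsilon}(\rho,u)$, which is continuous (a minimum of three continuous functions) and, by the explicit values in Lemma~\ref{Lem2} together with Lemma~\ref{Lem3} and the irrationality of $(\theta_0/\pi)^2$, vanishes on $D_{m_0}$ only at $\overline{\rho}$; Kushner's theorem plus compactness then gives convergence directly, with no visit-time argument. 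Your route is workable but strictly more laborious; the paper's observation that the maximizing feedback inherits the strictly positive decrease of the restoring action at every wrong Fock state would have let you delete step (iv).
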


Its proof is decomposed into two steps:

\noindent \textbf{First Step.} Choose $\overline{n} \in \NN$ and $\epsilon > 0$. Let $n_0 = n_{length}(\rho_0)$, $r_0=n_{min}(\rho_0)$. Then, there exists an integer $m_0 > n_0 + r_0 + \overline{n} +1$ (depending on $n_0, r_0$, $\overline{n}$ and $\epsilon$)  such that, for all  closed-loop realizations $\rho_k$,  one has $\rho_k \in D_{m_0}$ for $k \in \NN$.
	
\noindent \textbf{Second Step.} Choose irrational numbers $\phi_0/ \pi$ and $(\theta_0/\pi)^2$ in \eqref{eq:M0}--\eqref{eq:Mm}, and take $\phi_R=\pi/2 - \overline{n} \phi_0$. In $D_{m_0}$, $V_\epsilon$ is a strict super-martingale: for all density operators  $\rho$  in  $D_{m_0}$, one has
\[
\EE{V_\epsilon(\rho_{k+1})~|~\rho_k=\rho, u_k=f(\rho)} -  V_\epsilon(\rho) = - Q_{V_\epsilon} (\rho, f(\rho)),
\]
where $Q_{V_\epsilon} (\rho, f(\rho))\geq 0$, and  $Q_{V_\epsilon} (\rho, f(\rho)) = 0$  if and only
if $\rho= \overline{\rho}$. The almost sure convergence follows then from usual results on strict super-martingales for Markov processes with compact state spaces.

The complete proof of the two steps above is presented in Section~\ref{diagproof}. The general case where the initial condition $\rho_0$ is not necessarily diagonal is treated in the next subsection.

\subsection{General case}\label{generalcase}

Consider, for each $n^* \in \NN$,
\[
  \mathbb{D}_{n^*} = \left\{ \rho \in \mathbb{D}  ~|~   \rho\ket{n} = 0, \forall n > n^* \right\} \subset \mathbb{D}_{n^*+1},
\]
and let $\mathbb{D}_* = \bigcup_{n^* \in \NN} \mathbb{D}_{n^*} \supset D_*$.
It is clear that $\rho \in  \mathbb{D}$ is in $\mathbb{D}_{n^*}$ if and only if
$\rho = \sum_{m,n=0}^{n^*} \rho_{mn} \ket{m} \bra{n}$. Consequently, $\mathbb{D}_*$ is a dense subset of $\mathbb{D}$ when $\mathbb{D}$ is endowed with the subspace topology induced from the Hilbert-Schmidt norm. Indeed, let $\mathcal{J}_2$ be the complex Banach space of all Hilbert-Schmidt operators on $\mathcal{H}$ with the Hilbert-Schmidt norm $\| B \|_2 = ( \sum_{m,n \in \NN} | B_{mn} |^2 )^{1/2}$, for $B \in \mathcal{J}_2$ \cite{reed-simon-1, conway-book}. Since $\mathbb{D} \subset \mathcal{J}_2$ and $\rho \in \mathbb{D}_{n^*}$ has the form $\rho = \sum_{m,n=0}^{n^*} \rho_{mn} \ket{m} \bra{n}$, the density property of $\mathbb{D}_*$ in $\mathbb{D}$ is clear.

One has that $\rho \in \mathbb{D}_{n^*}$ may be considered as an operator from $\mathcal{H}$ to the finite-dimensional space
$\mathcal{H}_{n^*}$, or as a density matrix on $\mathcal{H}_{n^*}$. Hence, $d(\bN) \rho$ is a well-defined trace-class operator on $\mathcal{H}$, by considering $d(\bN)$ as an operator on $\mathcal{H}_{n^*}$ and $\rho \in \mathbb{D}_{n^*}$ as an operator from $\mathcal{H}$ to $\mathcal{H}_{n^*}$. Indeed, $d(\bN) \rho = \sum_{m,n=0}^{n^*} \rho_{mn} (m-\overline{n})^2 \ket{m} \bra{n}$, and it is trace-class because its range is finite-dimensional \cite{reed-simon-1, conway-book}. Consequently,
 the Lyapunov function $V_\epsilon$ in \eqref{lyapfundef}, the feedback in \eqref{eq:feedback} and $n_{max}$ can be extended to $\mathbb{D}_*$.

Define the map $\Delta$:~$\mathbb{D}_* \rightarrow D_* \subset \mathbb{D}_*$ as $\Delta \rho = \sum_{n=0}^{n_{max}(\rho)} \rho_{nn} \ket{n} \bra{n}$. Note that $\Delta$ extracts the diagonal of $\rho \in \mathbb{D}_*$. It is easy to see that $n_{max}(\Delta \rho) = n_{max}(\rho)$ and $(\Delta \rho)_{nn} = \rho_{nn}$, $\rho \in \mathbb{D}_*$. Moreover, $\Delta \rho = \rho$ when $\rho \in D_*$. Other properties of the map $\Delta$ are given in the next result:

\begin{prop}\label{propertiesDelta}
Let $\rho \in  \mathbb{D}_*$,  $u \in \{-1, 0, 1\}$, $y=g,e$. Take $\alpha = \small {\tr{\bM_y(u)\rho \bM_y^\dag(u)}}$. Then:
\begin{itemize}
	\item $\tr{A \rho} = \tr{A \Delta \rho}$, for every diagonal bounded operator $A$:~$\mathcal{H} \rightarrow \mathcal{H}$; \smallskip
	\item $V_\epsilon(\rho) = V_\epsilon(\Delta \rho)$, for $\epsilon > 0$;  \smallskip
	\item $\alpha^{-1} \bM_y(u) \rho \bM_y^\dag(u)$ belongs to $\mathbb{D}_*$ with $\Delta \big(\alpha^{-1}\bM_y(u) \rho \bM_y^\dag(u)\big)= \alpha^{-1} \bM_y(u) (\Delta \rho) \bM_y^\dag(u)$;  \smallskip
	\item $\big[ \bM_y(u) (\Delta \rho) \bM_y^\dag(u) \big]_{nn} = \big[ \bM_y(u) \rho \bM_y^\dag(u) \big]_{nn}$, for all $n \in \NN$. In particular,
	$\alpha = \tr{\bM_y(u) (\Delta \rho) \bM_y^\dag(u)}$.
\end{itemize}
\end{prop}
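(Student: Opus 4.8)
The plan is to extract, once and for all, the single structural fact behind the whole statement: every Kraus operator is a \emph{weighted shift}. Reading off \eqref{eq:M0}--\eqref{eq:Mm}, for each $u\in\{-1,0,1\}$ and $y\in\{g,e\}$ there are an integer $s=s_{y,u}\in\{-1,0,1\}$ and a scalar sequence $(c_n)_{n\in\NN}$ such that $\bM_y(u)\ket{n}=c_n\ket{n+s}$, with the convention $\ket{n+s}=0$ for $n+s<0$: the operators $\bM_g(0),\bM_e(0),\bM_g(-1),\bM_e(+1)$ are diagonal ($s=0$), $\bM_g(+1)\ket{n}=\sin(\tfrac{\theta_0}{2}\sqrt{n+1})\,\ket{n+1}$ ($s=+1$), and $\bM_e(-1)\ket{n}=\sin(\tfrac{\theta_0}{2}\sqrt{n})\,\ket{n-1}$ ($s=-1$, with $c_0=0$). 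In particular, conjugation by $\bM_y(u)$ enlarges the photon-number support by at most one.

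For the first item I would write $\rho=\sum_{m,n=0}^{n^*}\rho_{mn}\ket{m}\bra{n}$ with $n^*\geq n_{max}(\rho)$; since $\rho\ket{k}=0$ for $k>n^*$, for any bounded diagonal $A$,
\[
\tr{A\rho}=\sum_{k\geq0}\bra{k}A\rho\ket{k}=\sum_{k=0}^{n^*}\sum_{m=0}^{n^*}A_{km}\rho_{mk}=\sum_{k=0}^{n^*}A_{kk}\rho_{kk},
\]
the off-diagonal entries of $A$ dropping out, and the same computation with $\Delta\rho$ (using $(\Delta\rho)_{kk}=\rho_{kk}$) gives $\tr{A\,\Delta\rho}=\sum_k A_{kk}\rho_{kk}=\tr{A\rho}$. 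The second item is then immediate: $\tr{d(\bN)\rho}=\sum_{n=0}^{n^*}(n-\overline n)^2\rho_{nn}=\sum_{n=0}^{n^*}(n-\overline n)^2(\Delta\rho)_{nn}=\tr{d(\bN)\,\Delta\rho}$ (this is the first item applied to the bounded diagonal operator $d(\bN)\Pi_{n^*}$, with $\Pi_{n^*}$ the orthogonal projection onto $\mathcal{H}_{n^*}$), while $\sum_n\rho_{nn}^2=\sum_n(\Delta\rho)_{nn}^2$ trivially, hence $V_\epsilon(\rho)=V_\epsilon(\Delta\rho)$.

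For the third and fourth items I would use the weighted-shift form directly. For $\rho\in\mathbb{D}_{n^*}$,
\[
\bM_y(u)\rho\bM_y^\dag(u)=\sum_{m,n=0}^{n^*}\rho_{mn}\,c_m\,\overline{c_n}\,\ket{m+s}\bra{n+s},
\]
which is finite rank with range in $\mathcal{H}_{n^*+1}$; replacing $\rho$ by $\Delta\rho$ gives
\[
\bM_y(u)(\Delta\rho)\bM_y^\dag(u)=\sum_{n=0}^{n^*}\rho_{nn}\,|c_n|^2\,\ket{n+s}\bra{n+s},
\]
which is diagonal, and comparing the coefficient of $\ket{p}\bra{p}$ in the two displays (only the term $m=n=p-s$ survives in the first) shows that the two operators have the same diagonal entries, namely $|c_{p-s}|^2\rho_{p-s,p-s}$ (read as $0$ unless $0\leq p-s\leq n^*$). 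This is the first assertion of the fourth item, and summing over $p$ gives $\alpha=\tr{\bM_y(u)(\Delta\rho)\bM_y^\dag(u)}$. For the third item, $\bM_y(u)\rho\bM_y^\dag(u)$ is self-adjoint, non-negative ($\bra{\psi}\bM_y(u)\rho\bM_y^\dag(u)\ket{\psi}=\bra{\bM_y^\dag(u)\psi}\rho\ket{\bM_y^\dag(u)\psi}\geq0$) and trace-class with trace $\alpha$; when $\alpha>0$, which is the only case that occurs by the footnote after \eqref{eq:IdealMarkovModel}, its normalization $\alpha^{-1}\bM_y(u)\rho\bM_y^\dag(u)$ is a density operator with range in $\mathcal{H}_{n^*+1}$, hence lies in $\mathbb{D}_*$; and since $\Delta$ commutes with multiplication by the positive scalar $\alpha^{-1}$ and the two displays above say precisely that $\Delta(\bM_y(u)\rho\bM_y^\dag(u))=\bM_y(u)(\Delta\rho)\bM_y^\dag(u)$, the stated identity follows.

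No step here is genuinely difficult; what needs care is the bookkeeping---verifying that the weighted-shift description really covers all six operators (in particular $\ba\ket{0}=0$ and the convention $\sin(0)/0=1$, which force $c_0=0$ for $\bM_e(-1)$), and tracking the finite-support index through the conjugation $\rho\mapsto\bM_y(u)\rho\bM_y^\dag(u)$.
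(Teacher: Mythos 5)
Your proof is correct and follows essentially the same route as the paper's: both arguments amount to computing $\bM_y(u)\rho\bM_y^\dag(u)$ explicitly in the Fock basis and comparing its diagonal entries with those of $\bM_y(u)(\Delta\rho)\bM_y^\dag(u)$, the first two items being immediate from $(\Delta\rho)_{nn}=\rho_{nn}$. Your only variation is cosmetic: you package the six case-by-case formulas the paper writes out (its Appendix computations of $\bM_y(u)\rho\bM_y^\dag(u)\ket{\psi}$) into the single weighted-shift statement $\bM_y(u)\ket{n}=c_n\ket{n+s}$, which streamlines the bookkeeping but is the same underlying computation.
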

\begin{proof}
See Appendix~\ref{proofDelta}.
\end{proof}

Now, let $\epsilon > 0$ and $\overline{\rho} = \ket{\overline{n}} \bra{\overline{n}}$, where $\overline{n} \in \NN$. Assume that $\rho_0 \in  \mathbb{D}_*$. Let $\rho_k$, $k \in \NN$, be the corresponding closed-loop trajectory for a fixed realization of \eqref{eq:IdealMarkovModel}--\eqref{eq:Mm} with feedback $u_k=f(\rho_k)$, where $f$ is as in~\eqref{eq:feedback}. It is immediate from the proposition above that:
\begin{itemize}
	\item $\rho_k \in \mathbb{D}_*$, for $k \in \NN$; \smallskip
	\item $\Delta \rho_k \in D_*$, $k \in \NN$, is the corresponding closed-loop trajectory of \eqref{eq:IdealMarkovModel}--\eqref{eq:Mm} for the initial condition $\Delta \rho_0$, the same realization (and with the same transition probabilities $p_{e,k}$ and $p_{g,k}$), as well as the same feedback $u_k = f(\rho_k) = f(\Delta \rho_k)$;
\smallskip
	\item $\tr{\ket{n} \bra{n} \rho_k} = \tr{\ket{n} \bra{n}  \Delta \rho_k}$, for any $n \in \NN$.
\end{itemize}

From these arguments, Theorem~\ref{thm:IdealConvergence} and the fact that $\Delta \overline{\rho} = \overline{\rho}$, one immediately obtains the following \emph{generic} solution to the control problem, that is, when the initial condition $\rho_0$ belongs to the dense subset $\mathbb{D}_*$ of $\mathbb{D}$:

\begin{thm}\label{thm:IdealConvergenceGeneral}
Let $\overline{n} \in \NN$ and $\epsilon > 0$. In \eqref{eq:M0}--\eqref{eq:Mm}, assume that $\phi_0/ \pi$ and $(\theta_0/\pi)^2$ are irrational numbers, and take $\phi_R=\pi/2 - \overline{n} \phi_0$. Consider the closed-loop Markov process~\eqref{eq:IdealMarkovModel}--\eqref{eq:Mm} with $u_k=f(\rho_k)$, where the feedback law $f$  is as in~\eqref{eq:feedback}. Then, given any initial condition $\rho_0 \in \mathbb{D}_*$, one has that $\rho_k$ converges almost surely towards $\overline{\rho}=\ket{\overline{n}}\bra{\overline{n}}$ as $k \rightarrow \infty$.
\end{thm}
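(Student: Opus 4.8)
The plan is to reduce Theorem~\ref{thm:IdealConvergenceGeneral} to the diagonal case Theorem~\ref{thm:IdealConvergence} by transporting each closed-loop trajectory to its diagonal through the map $\Delta$, relying on Proposition~\ref{propertiesDelta}. The first thing to verify is that the feedback factors through $\Delta$, i.e.\ $f(\rho)=f(\Delta\rho)$ for every $\rho\in\mathbb{D}_*$. Writing the conditional expectation in~\eqref{eq:feedback} explicitly, $\EE{V_\epsilon(\rho_{k+1})~|~\rho_k=\rho, u_k=\upsilon}=\sum_{y\in\{g,e\}} p_y\, V_\epsilon\big(p_y^{-1}\bM_y(\upsilon)\rho\,\bM_y^\dag(\upsilon)\big)$ with $p_y=\tr{\bM_y(\upsilon)\rho\,\bM_y^\dag(\upsilon)}$ (terms with $p_y=0$ omitted, by the convention of the footnote after~\eqref{eq:IdealMarkovModel}). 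By the last bullet of Proposition~\ref{propertiesDelta}, $p_y=\tr{\bM_y(\upsilon)(\Delta\rho)\bM_y^\dag(\upsilon)}$; by the second and third bullets, $V_\epsilon\big(p_y^{-1}\bM_y(\upsilon)\rho\,\bM_y^\dag(\upsilon)\big)=V_\epsilon\big(p_y^{-1}\bM_y(\upsilon)(\Delta\rho)\bM_y^\dag(\upsilon)\big)$. Hence the quantity minimised in~\eqref{eq:feedback} depends on $\rho$ only through $\Delta\rho$, so $f(\rho)=f(\Delta\rho)$.

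Next I would set up a coupling. Fix $\rho_0\in\mathbb{D}_*$, and on a common probability space let $(\rho_k)_{k\in\NN}$ be a closed-loop realization of~\eqref{eq:IdealMarkovModel}--\eqref{eq:Mm} with $u_k=f(\rho_k)$, driven by measurement outcomes $(y_k)_{k\in\NN}$. Set $\sigma_0=\Delta\rho_0\in D_*$ and let $(\sigma_k)$ evolve under~\eqref{eq:IdealMarkovModel}--\eqref{eq:Mm} with feedback $f(\sigma_k)$ fed by the same outcomes $(y_k)$. An induction shows $\sigma_k=\Delta\rho_k$ for all $k$: if $\sigma_k=\Delta\rho_k$, then $f(\sigma_k)=f(\Delta\rho_k)=f(\rho_k)=:u_k$ by the identity just established, and with $\alpha=\tr{\bM_{y_k}(u_k)\rho_k\,\bM_{y_k}^\dag(u_k)}$ the third bullet of Proposition~\ref{propertiesDelta} gives $\Delta\rho_{k+1}=\alpha^{-1}\bM_{y_k}(u_k)(\Delta\rho_k)\bM_{y_k}^\dag(u_k)=\alpha^{-1}\bM_{y_k}(u_k)\sigma_k\,\bM_{y_k}^\dag(u_k)=\sigma_{k+1}$, the normalising constants agreeing by the last bullet of Proposition~\ref{propertiesDelta}. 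Moreover $\sigma_k\in D_*$ for all $k$ by Proposition~\ref{properties-pk}. Since at each step the conditional law of $y_k$ is the same computed from $\rho_k$ or from $\sigma_k=\Delta\rho_k$ (same $p_{g,k}, p_{e,k}$, again by the last bullet of Proposition~\ref{propertiesDelta}), the chain $(\sigma_k)$ so obtained is genuinely a closed-loop realization of~\eqref{eq:IdealMarkovModel}--\eqref{eq:Mm} with initial condition $\Delta\rho_0\in D_*$; hence almost-sure statements about $(\sigma_k)$ are almost-sure statements about $(\Delta\rho_k)$.

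Finally, since $\Delta\overline{\rho}=\overline{\rho}$ and $\sigma_0=\Delta\rho_0\in D_*$, Theorem~\ref{thm:IdealConvergence} applies to $(\sigma_k)$ and yields $P_n(\sigma_k)\to P_n(\overline{\rho})$ almost surely for each $n\in\NN$. By the first bullet of Proposition~\ref{propertiesDelta} with the diagonal operator $A=\ket{n}\bra{n}$ (equivalently, by the definition of $\Delta$), $P_n(\rho_k)=\rho_{nn}=(\Delta\rho_k)_{nn}=P_n(\sigma_k)$, so $P_n(\rho_k)\to P_n(\overline{\rho})$ almost surely for each $n$ --- which is exactly the almost sure convergence $\rho_k\to\overline{\rho}$ required by Definition~\ref{controlproblem}. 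There is no real analytic difficulty in this reduction: the one point needing care is the coupling bookkeeping of the second paragraph, namely that driving the $\sigma$-chain by the outcomes of the $\rho$-chain does not change its law, and this is guaranteed precisely by the equality of transition probabilities in Proposition~\ref{propertiesDelta}. All the substantive work --- propagation of the finite photon-number support and the strict super-martingale / LaSalle-type argument --- is already contained in Theorem~\ref{thm:IdealConvergence}.
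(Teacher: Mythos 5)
Your proposal is correct and follows essentially the same route as the paper: it reduces to the diagonal case via the map $\Delta$, using Proposition~\ref{propertiesDelta} to show that the feedback, the transition probabilities and the populations all factor through $\Delta\rho_k$, so that $\Delta\rho_k$ is a closed-loop realization with initial condition $\Delta\rho_0\in D_*$ to which Theorem~\ref{thm:IdealConvergence} applies. Your write-up merely makes explicit (via the induction/coupling bookkeeping) what the paper states as ``immediate'' consequences of that proposition.
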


\section{SIMULATION RESULTS}\label{simus}

This section presents the closed-loop simulation results concerning the application of Theorem~\ref{thm:IdealConvergenceGeneral} above to the ideal Markov process~\eqref{eq:IdealMarkovModel}--\eqref{eq:Mm}. The quantum experimental results exhibited in \cite{zhouPRL2012} used the following control parameter values in \eqref{eq:M0}--\eqref{eq:Mm}: $\phi_0 / \pi = 0.252$ and $\theta_0/\pi \approx 2/\sqrt{\overline{n} + 1}$. However, according to the assumptions in Theorem~\ref{thm:IdealConvergenceGeneral}, $\phi_0/ \pi$ and $(\theta_0/\pi)^2$ should be irrational numbers. Hence, here one chooses  $\phi_0/3.14 = 0.252$ and $\theta_0/3.14 =   2/\sqrt{\overline{n} + 1}$. One takes
$\rho_0 = \sum_{n=0}^{15} \ket{n}\bra{n}/16 \in \mathbb{D}_*$  as the initial condition, $\overline{n}=10$ for the goal Fock state
$\overline{\rho}=\ket{\overline{n}}\bra{\overline{n}}$, and $\epsilon = 10^3$ as the gain for the feedback $u_k=f(\rho_k)$ in \eqref{lyapfundef}--\eqref{eq:feedback}. Figure~\ref{sim1} exhibits the simulation results for one closed-loop realization with such choices and a final sample step of 120. It shows: the dynamics of the populations of $\rho_k$ (top), the controls $u_k$ (middle) and the simulated outcomes $y_k$ (bottom). The populations of $\rho_k$ correspond to the following observables: $A_1 = \sum_{n=0}^{\overline{n}-1} \ket{n}\bra{n}$ ($n < \overline{n}$), $A_2 = \ket{\overline{n}}\bra{\overline{n}}$ ($n = \overline{n}$), $A_3 = \sum_{n > \overline{n}} \ket{n}\bra{n}$ ($n > \overline{n}$). Therefore, one sees from the dynamics of the populations that $\rho_k$ converges to $\overline{\rho}$ as $k \rightarrow \infty$, which is in accordance with Theorem~\ref{thm:IdealConvergenceGeneral}. Note that $\bra{\overline{n}} \rho_k \ket{\overline{n}} \approx 1$ and $u_k = 0$ for all $k > 45$.

Recall that Theorem~\ref{thm:IdealConvergenceGeneral} assumes that $\epsilon > 0$. In order to further analyze the performance of the Lyapunov-based feedback law here proposed, we now make a comparison with the one used experimentally in \cite{zhouPRL2012}, which corresponds to take $\epsilon = 0$ in \eqref{lyapfundef}, i.e.\ to disregard the term $- \epsilon \sum_{n \in \NN} \rho_{nn}^2$. Figure~\ref{sim2} presents the simulation results for one closed-loop realization of such case. The control parameters, $\rho_0$ and $\overline{n}=10$ are the same as above. Note that $\bra{\overline{n}} \rho_k \ket{\overline{n}} \approx 1$ and $u_k = 0$ for all $k > 78$. In order to make a comparison in terms of the speed of convergence, define the settling time $k_s$ to be the smallest $\widetilde{k} \in \NN$ such that  $\bra{\overline{n}} \rho_k \ket{\overline{n}} > 0.9$ for all $k \geq \widetilde{k}$. One has $k_s=45$ for the case $\epsilon = 10^3$ above, and $k_s=78$ for $\epsilon = 0$. Therefore, in the two realizations here simulated, the choice of $\epsilon = 10^3$ reduced the settling time $k_s$ by nearly $42\%$ with respect to $\epsilon = 0$. This behavior is typical on an average basis, thereby justifying the term $- \epsilon \sum_{n \in \NN} \rho_{nn}^2$ in \eqref{lyapfundef}. Table~\ref{averagetable} shows the average value
$\overline{k}_s$ and the standard deviation $\sigma$ of $k_s$ for $\epsilon \in \{0, 0.1, 1, 10, 10^2, 10^3, 10^4, 10^5\}$, where a total of 5000 realizations were simulated for each $\epsilon$. Notice that when $\epsilon$ is relatively large or relatively small in comparison to $\epsilon = 10^3$, the average settling time $\overline{k}_s$ deteriorated. Furthermore, although for $\epsilon = 10^5$ one has that $\overline{k}_s$ increased by nearly $22\%$ in comparison to $\epsilon=10^3$, the standard deviation $\sigma$ decreased by nearly $62\%$. Computer simulations have suggested that a choice of $\epsilon > 0$ which may perhaps significantly improve $\overline{k}_s$ generally depends on the initial condition $\rho_0$ and on the goal Fock state $\overline{\rho}=\ket{\overline{n}}\bra{\overline{n}}$, and it has to be determined heuristically.

\begin{figure}[!hbt]
	\centering{\includegraphics[scale=0.5]{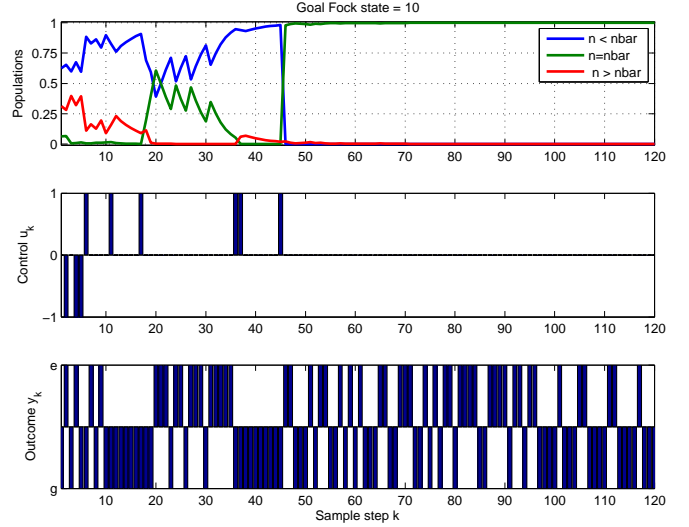}}
	\vspace{-0.7cm}
	\caption{Simulation of one closed-loop realization with gain $\epsilon = 10^3$: convergence of $\rho_k$ towards $\overline{\rho}$ (top), controls $u_k$ (middle), and outcomes $y_k$  (bottom). Notice that $\bra{\overline{n}} \rho_k \ket{\overline{n}} \approx 1$ and $u_k = 0$ for all $k > 45$.}\label{sim1}
\end{figure}

\begin{figure}[!hbt]
	\centering{\includegraphics[scale=0.5]{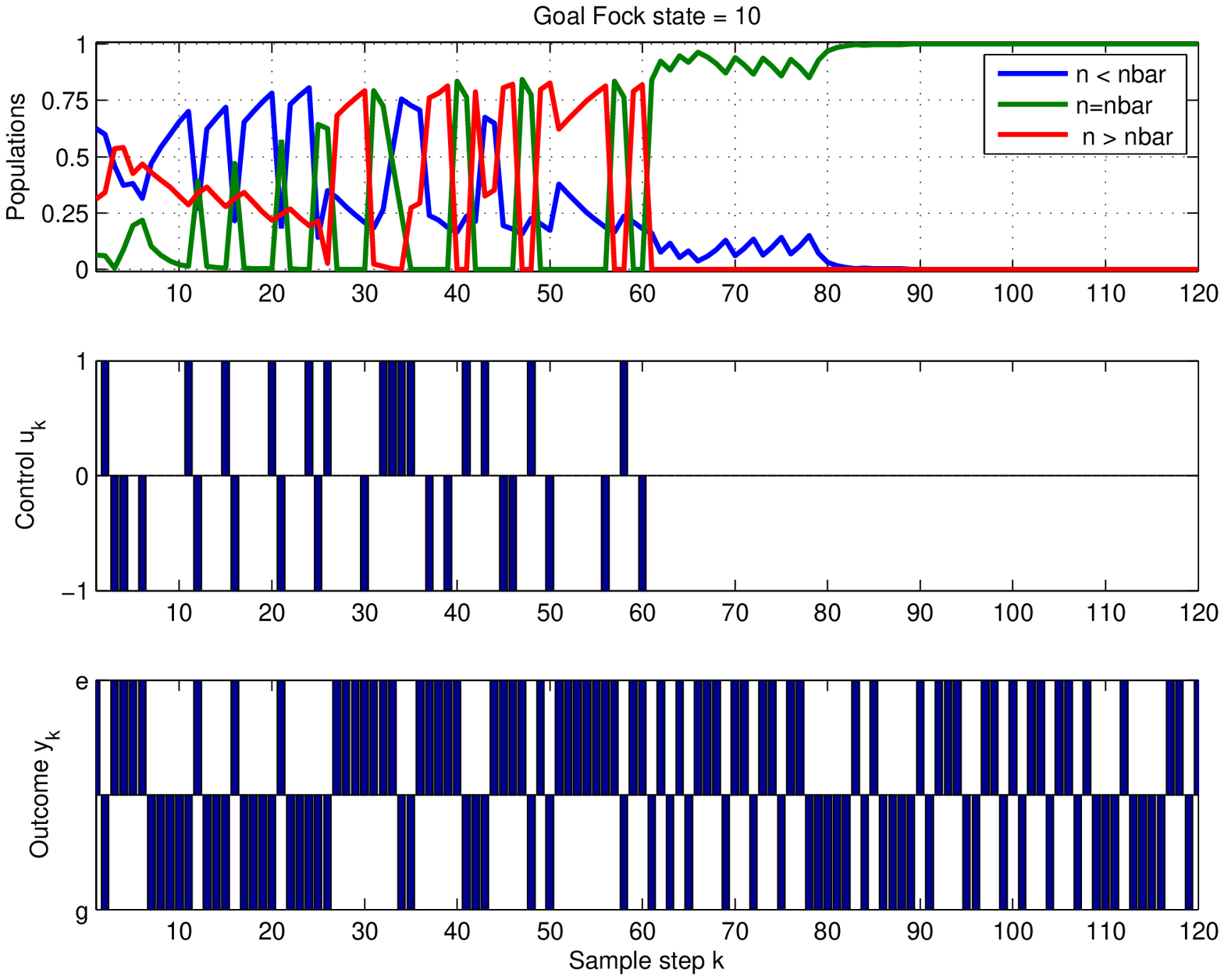}}
	\vspace{-0.7cm}
	\caption{Simulation of one closed-loop realization with gain $\epsilon = 0$: convergence of $\rho_k$ towards $\overline{\rho}$ (top), controls $u_k$ (middle), and outcomes $y_k$  (bottom). Notice that $\bra{\overline{n}} \rho_k \ket{\overline{n}} \approx 1$ and $u_k = 0$ for all $k > 78$.}\label{sim2}
\end{figure}

\begin{table}[!htb]
\caption{Average settling time $\overline{k}_{s}$ and standard deviation $\sigma$ as a function of the gains $\epsilon$, considering 5000 realizations}
\begin{center}
\begin{tabular}{| l | l | l | l | l |}\hline
	$\epsilon = 0$ & $\epsilon = 0.1$ & $\epsilon = 1$ &  $\epsilon =10$ \\
	$\overline{k}_{s}=79.94$ & $\overline{k}_{s}=79.95$  & $\overline{k}_{s}=81.24$ & $\overline{k}_{s}=71.33$ \\
	$\sigma=164.97$ & $\sigma=166.61$  & $\sigma=174.29$ & $\sigma=150.95$ \\ \hline \hline
	$\epsilon = 10^2$ & $\epsilon = 10^3$ & $\epsilon =10^4$ &  $\epsilon =10^{5}$  \\
	$\overline{k}_{s}=60.41$ & $\overline{k}_{s}=44.18$  & $\overline{k}_{s}=47.05$ & $\overline{k}_{s}=53.77$ \\
	$\sigma=119.39$ & $\sigma=44.12$  & $\sigma=37.37$ & $\sigma=16.84$ \\
\hline
\end{tabular}\label{averagetable}

\end{center}
\end{table}

\section{PROOF OF THEOREM~\ref{thm:IdealConvergence} (DIAGONAL CASE)}\label{diagproof}

\noindent \textbf{Proof of the First Step:}\\
Let $\epsilon > 0$. Define $V$:~$D_* \rightarrow \RR$ and $W$:~$D_* \rightarrow \RR$ as
\begin{equation}\label{defVW}
	V(\rho) =\tr{ d(\bN) \rho}, \qquad W(\rho) = -\sum_{n\in\NN} \rho_{nn}^2,
\end{equation}
respectively. Note that $V_\epsilon = V + \epsilon W$. Define:
\begin{itemize}
 	\item $Q_W (\rho, u) = W(\rho) - \EE{W(\rho_{k+1})~|~\rho_k = \rho, u_k=u}$,  \smallskip
  	\item $Q_V (\rho, u) = V(\rho) - \EE{V(\rho_{k+1})~|~\rho_k = \rho, u_k=u}$,  \smallskip
  	\item $Q_{V_\epsilon}(\rho,u) =  V_\epsilon(\rho)  - \EE{V_\epsilon(\rho_{k+1})~|~\rho_k = \rho, u_k=u}$,
\end{itemize}
for $\rho \in D_*$ and $u \in \{-1,0,1\}$. The proof of Theorem~\ref{thm:IdealConvergence} is a straightforward consequence of the next proposition:

 \begin{prop} \label{p1}
Let $\epsilon > 0$ and $n_0, r_0, \overline{n} \in \NN$.
There exists an integer $m_0 > n_0 + r_0 + \overline{n} + 1$ (depending on $\epsilon, n_0, r_0, \overline{n}$) such that, for each
$\rho \in D_*$ with $n_{length} (\rho) \leq n_0$, if $n_{max}(\rho) = m_0$, then
\[
	Q_{V_\epsilon} (\rho, -1) > \max \left\{ Q_{V_\epsilon} (\rho,0), Q_{V_\epsilon} (\rho,+1) \right\}.
\]
\end{prop}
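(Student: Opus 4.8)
The plan is to reduce Proposition~\ref{p1} to explicit formulas for the one-step drifts $Q_V(\rho,u)$, then to use the quadratic growth of $d$ to make $Q_V(\rho,-1)$ large when $n_{max}(\rho)=m_0$ is large, while absorbing the uniformly bounded term $\epsilon Q_W$. First I would compute, for diagonal $\rho\in D_*$ and $u\in\{-1,0,1\}$,
\[
\begin{aligned}
	&\EE{V(\rho_{k+1})\mid \rho_k=\rho,\ u_k=u}\\
	&\quad = \tr{\big(\bM_g^\dag(u)\,d(\bN)\,\bM_g(u) + \bM_e^\dag(u)\,d(\bN)\,\bM_e(u)\big)\rho};
\end{aligned}
\]
the cyclicity used is legitimate since $\rho\in D_*$ confines everything to the finite-dimensional $\mathcal{H}_{n_{max}(\rho)}$ (or $\mathcal{H}_{n_{max}(\rho)+1}$ for $u=+1$). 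Substituting \eqref{eq:M0}--\eqref{eq:Mm}, writing $b_n=\tfrac{\theta_0}{2}\sqrt n$, and using $(n-\overline n)^2-(n-1-\overline n)^2=2n-2\overline n-1$ together with $(n-\overline n)^2-(n+1-\overline n)^2=-(2n-2\overline n+1)$, the bracketed operators turn out to be diagonal and one gets $Q_V(\rho,0)=0$, $Q_V(\rho,+1)=-\sum_n\rho_{nn}\sin^2(b_{n+1})(2n-2\overline n+1)$, and $Q_V(\rho,-1)=\sum_n\rho_{nn}\sin^2(b_n)(2n-2\overline n-1)$.

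Next I would unwind the hypotheses on the support. Since $n_{length}(\rho)\le n_0$ and $n_{max}(\rho)=m_0$, we have $\rho_{nn}=0$ unless $m_0-n_0\le n\le m_0$, and then $m_0>n_0+r_0+\overline n+1$ forces $n>\overline n$ for every populated $n$. Hence every summand of $Q_V(\rho,+1)$ is $\le0$, so $Q_V(\rho,+1)\le 0=Q_V(\rho,0)$, while $Q_V(\rho,-1)\ge\big(2(m_0-n_0)-2\overline n-1\big)\sum_n\rho_{nn}\sin^2(b_n)$. For the $W$-part it is enough to use the crude bound $W(\sigma)\in[-1,0]$ valid for every density operator $\sigma$ (since $0\le\sum_n\sigma_{nn}^2\le\sum_n\sigma_{nn}=1$), which gives $|Q_W(\rho,u)|\le1$ for all $u$; with $Q_{V_\epsilon}=Q_V+\epsilon Q_W$ this yields $\max\{Q_{V_\epsilon}(\rho,0),Q_{V_\epsilon}(\rho,+1)\}\le\epsilon$ and $Q_{V_\epsilon}(\rho,-1)\ge Q_V(\rho,-1)-\epsilon$. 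So everything comes down to choosing $m_0$ so that $Q_V(\rho,-1)>2\epsilon$ uniformly over all admissible $\rho$.

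This last point is the step I expect to be the main obstacle: one must make $\sin^2(b_n)$ bounded away from $0$ \emph{simultaneously} for all $n$ in the whole window $\{m_0-n_0,\dots,m_0\}$, since a $\rho$ concentrated at one $n$ where $\sin^2(b_n)$ is tiny would otherwise ruin the estimate. Here I would use that $\theta_0\neq0$ (so, without loss of generality, $\theta_0>0$), together with the elementary facts that $b_n=\tfrac{\theta_0}{2}\sqrt n$ is increasing, that $b_n\to\infty$, and that $b_{n+1}-b_n\to0$: consequently $\{b_n\bmod\pi\}$ is dense in $[0,\pi]$, whereas the oscillation $b_{m_0}-b_{m_0-n_0}=\tfrac{\theta_0}{2}\,n_0/(\sqrt{m_0}+\sqrt{m_0-n_0})\to0$ as $m_0\to\infty$. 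Therefore there are arbitrarily large integers $m_0$ such that $b_m\bmod\pi$ lies within $\pi/8$ of $\pi/2$ for every $m\in\{m_0-n_0,\dots,m_0\}$, whence $\sin^2(b_m)\ge\tfrac12$ on the window and $Q_V(\rho,-1)\ge m_0-n_0-\overline n-\tfrac12$ for all admissible $\rho$. Finally I would pick such an $m_0$ that also satisfies $m_0>n_0+r_0+\overline n+1$ and $m_0>n_0+\overline n+\tfrac12+2\epsilon$ (possible, as these valid $m_0$ are unbounded); then
\[
	Q_{V_\epsilon}(\rho,-1)\ge m_0-n_0-\overline n-\tfrac12-\epsilon>\epsilon\ge\max\{Q_{V_\epsilon}(\rho,0),Q_{V_\epsilon}(\rho,+1)\},
\]
which is the claim. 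What remains is the bookkeeping of the trace and commutator computations behind the drift formulas, together with the verification of the precise constants, both routine.
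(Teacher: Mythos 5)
Your proposal is correct and follows essentially the same route as the paper: the drift identities $Q_V(\rho,0)=0$, $Q_V(\rho,\pm1)$ (the paper's Lemma~\ref{Lem2}), the uniform bound $|Q_W|\leq 1$, the observation that the support window $\{m_0-n_0,\dots,m_0\}$ lies above $\overline{n}$ so $Q_V(\rho,+1)\leq 0$, and the reduction to making $Q_V(\rho,-1)>2\epsilon$ by placing the whole window where $\sin^2\left(\tfrac{\theta_0}{2}\sqrt{n}\right)$ is bounded below. The only cosmetic difference is that you justify the placement of the window via density of $\tfrac{\theta_0}{2}\sqrt{n} \bmod \pi$ together with the vanishing gap $b_{m_0}-b_{m_0-n_0}\to 0$, whereas the paper's Lemma~\ref{Lem1} constructs the window explicitly around a point where $\tfrac{\theta_0}{2}\sqrt{\cdot}=\ell\tfrac{\pi}{2}+\tfrac{\pi}{4}$ using the mean value theorem---the same underlying fact.
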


In fact, given $\rho_0 \in D_*$, let $n_0 = n_{length}(\rho_0)$ and $r_0 = n_{min}(\rho_0)$. Note that $n_{max}(\rho_0) = n_0 + r_0 < m_0$.
By Proposition~\ref{properties-pk}, $\rho_k \in D_*$ with $n_{length}(\rho_k) \leq n_0$, for all $k \in \NN$. Since $u=f(\rho)$ maximizes $Q_{V_\epsilon} (\rho, f(\rho))$, Proposition~\ref{p1} implies that when $n_{max} (\rho_k) = m_0$ for some $k \in \NN$,  then the input $u_k$ will be always be
equal to $-1$, and hence Proposition~\ref{properties-pk} ensures that $n_{max} (\rho_{k+1}) \leq n_{max}(\rho_k) = m_0$. Therefore, $n_{max}(\rho_k) \leq m_0$, $k \in \NN$, showing the First Step.

The following two lemmas are instrumental for showing Proposition~\ref{p1}. Their proofs are given in Appendix~\ref{aLem1} and Appendix~\ref{aLem2}, respectively.

\begin{lem} \label{Lem1} Given an arbitrary nonzero $\theta_0 \in \RR$, fix any $a \in \RR$ such that $0 < a < 1/2$. For all  nonzero $N_0, N \in \NN$,
there exists an integer $\overline{N} > N$ big enough such that,
\[
	0 < 1/2 - a \leq \sin^2 \left( \tfrac{\theta_0}{2} \sqrt{n} \right) \leq 1/2 + a,
\]
for $n = \overline{N}, \overline{N}+1, \ldots,\overline{N} + N_0 - 1$.
\end{lem}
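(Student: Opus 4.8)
The plan is to use the elementary observation that the increments $x_{n+1}-x_n$ of the sequence $x_n\triangleq\frac{\theta_0}{2}\sqrt{n}$ satisfy $x_{n+1}-x_n=\frac{\theta_0}{2(\sqrt{n+1}+\sqrt{n})}\to 0$ as $n\to\infty$, while $\lvert x_n\rvert\to\infty$. Thus the sequence $(x_n)$ eventually moves by arbitrarily small steps, so once $x_n$ enters a fixed short interval around a point where $\sin^2$ takes the value $1/2$, it stays inside that interval for as many consecutive indices as we wish. Since $\sin^2$ is even and $\theta_0\ne 0$, we may and do assume $\theta_0>0$; then $x_n$ is strictly increasing with $x_n\to+\infty$.

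First I would pin down the target intervals. One has $\sin^2 y=\tfrac12$ exactly at the points $c_m\triangleq\frac{\pi}{4}+m\frac{\pi}{2}$, $m\in\ZZ$, and for every $m$ and every $t\in\RR$ one gets $\sin^2(c_m+t)=\tfrac12\pm\tfrac12\sin 2t$, hence $\lvert\sin^2(c_m+t)-\tfrac12\rvert=\tfrac12\lvert\sin 2t\rvert\le\lvert t\rvert$. Therefore, setting $\delta\triangleq a>0$, one has $\lvert\sin^2(c_m+t)-\tfrac12\rvert\le a$ whenever $\lvert t\rvert\le\delta$; consequently it suffices to produce $N_0$ consecutive integers $n\ge N$ for which $x_n$ lies in one and the same interval $[c_{m}-\delta,c_{m}+\delta]$, because then $0<\tfrac12-a\le\sin^2 x_n\le\tfrac12+a$, the strict positivity using the hypothesis $a<\tfrac12$.

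Next I would carry out the construction. Since $x_{n+1}-x_n\to 0$, pick $N_1\ge N$ such that $0<x_{n+1}-x_n<\delta/N_0$ for all $n\ge N_1$. The centers $c_m$ tend to $+\infty$, so there is a smallest one, say $c_{m_0}$, with $c_{m_0}-\delta>x_{N_1}$; and since $x_n$ increases to $+\infty$, there is a smallest integer $\overline N>N_1$ with $x_{\overline N}\ge c_{m_0}-\delta$. By minimality, $x_{\overline N-1}<c_{m_0}-\delta$, hence $x_{\overline N}<c_{m_0}-\delta+\delta/N_0$. For $j=0,1,\dots,N_0-1$, adding $j$ increments each smaller than $\delta/N_0$ gives $c_{m_0}-\delta\le x_{\overline N}\le x_{\overline N+j}<c_{m_0}-\delta+(j+1)\,\delta/N_0\le c_{m_0}<c_{m_0}+\delta$, so $x_{\overline N+j}\in[c_{m_0}-\delta,c_{m_0}+\delta]$ and therefore $1/2-a\le\sin^2 x_{\overline N+j}\le 1/2+a$. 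Since $\overline N>N_1\ge N$, this is precisely the assertion for $n=\overline N,\overline N+1,\dots,\overline N+N_0-1$.

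The argument presents no genuine obstacle: it is a monotone-sequence and continuity argument. The only points needing a little care are translating the required closeness of $\sin^2$ to $1/2$ into an explicit admissible radius $\delta$ via the identity $\sin^2(c_m+t)=\tfrac12\pm\tfrac12\sin 2t$, reducing the sign of $\theta_0$ to $\theta_0>0$ by parity of $\sin^2$, and the bookkeeping that makes the whole block of $N_0$ consecutive indices start beyond $N$ and fit inside a single interval $[c_{m_0}-\delta,c_{m_0}+\delta]$ --- which is exactly what bounding the per-step displacement by $\delta/N_0$ achieves.
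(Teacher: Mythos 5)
Your proof is correct. It rests on the same core idea as the paper's: the steps of $x_n=\tfrac{\theta_0}{2}\sqrt{n}$ shrink like $O(1/\sqrt{n})$, so a block of $N_0$ consecutive values of $x_n$ can be trapped in a small window around a point where $\sin^2$ equals $1/2$. The execution differs, though. The paper works backwards through the square-root map: it picks an explicit preimage center $\eta(\overline{\ell})=\bigl[\tfrac{2}{\theta_0}(\overline{\ell}\tfrac{\pi}{2}+\tfrac{\pi}{4})\bigr]^2$, assumes $N_0$ even and $\overline{\ell}$ even (to use $\sin^2(x-\overline{\ell}\pi/2)=\sin^2(x)$), centers the integer block at $\lfloor\eta(\overline{\ell})\rceil$, and controls the spread of $\tfrac{\theta_0}{2}\sqrt{n}$ over that block by the mean value theorem, with window half-width $h=\pi/4-\arcsin\sqrt{1/2-a}$. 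You instead stay entirely in the range of the sequence: reduce to $\theta_0>0$ by evenness of $\sin^2$, bound the per-step displacement by $a/N_0$ beyond some $N_1\ge N$, and use a first-crossing argument to show that once $x_n$ enters $[c_{m_0}-a,\,c_{m_0}]$ it stays there for $N_0$ steps, combined with the clean linear estimate $\lvert\sin^2(c_m+t)-\tfrac12\rvert=\tfrac12\lvert\sin 2t\rvert\le\lvert t\rvert$, which lets you take the admissible radius simply equal to $a$. Your route avoids the parity bookkeeping, the arcsin constant, and the mean value theorem, and it makes the treatment of the sign of $\theta_0$ explicit; the paper's version has the mild advantage of producing a fairly explicit formula for $\overline{N}$ in terms of $\overline{\ell}$, which is in the spirit of its quantitative choice of $m_0$ in Proposition~\ref{p1}. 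Either argument serves the lemma equally well.
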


\begin{lem} \label{Lem2}
Let $\rho \in D_*$. Then:
  \begin{itemize}
\item $|Q_W (\rho, u)| \leq 1$, \quad  for each $u \in \{-1, 0, 1\}$; \smallskip
\item $Q_V (\rho, 0) = 0$; \smallskip
\item $Q_V (\rho, +1) = - \displaystyle \hspace{-2pt} \sum_{n \in \NN}  \hspace{-2pt} \rho_{nn} \hspace{-2pt} \left[ 2(n - \overline{n}) + 1 \right] \sin^2 \hspace{-2pt} \left( \tfrac{\theta_0}{2} \sqrt{n+1} \right)$; \smallskip
\item $Q_V (\rho, -1) = \displaystyle  \sum_{n \in \NN}  \rho_{nn} \left[ 2(n - \overline{n}) -1 \right] \sin^2 \left( \tfrac{\theta_0}{2} \sqrt{n} \right)$.
\end{itemize}
\end{lem}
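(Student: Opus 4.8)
The plan is to use that every $\rho \in D_*$ is diagonal and finitely supported, say $\rho = \sum_{n=0}^{n^*}\rho_{nn}\ket{n}\bra{n}$ with $n^* = n_{max}(\rho)$, so that the post-measurement states $\rho_{k+1}^g,\rho_{k+1}^e$ of \eqref{eq:IdealMarkovModel} are again diagonal with explicitly computable entries, and then to evaluate $\EE{V(\rho_{k+1})~|~\rho_k=\rho,u_k=u} = p_{g,k}V(\rho_{k+1}^g) + p_{e,k}V(\rho_{k+1}^e)$ directly, the normalizations $p_{g,k},p_{e,k}$ cancelling the denominators in $\rho_{k+1}^g,\rho_{k+1}^e$. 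First I would record the action of each Kraus operator on a Fock state: from \eqref{eq:M0}, $\bM_g(0)$ and $\bM_e(0)$ are diagonal; from \eqref{eq:Mp}, $\bM_g(+1)\ket{n} = \sin(\tfrac{\theta_0}{2}\sqrt{n+1})\ket{n+1}$ and $\bM_e(+1)\ket{n} = \cos(\tfrac{\theta_0}{2}\sqrt{n+1})\ket{n}$; from \eqref{eq:Mm}, $\bM_g(-1)\ket{n} = \cos(\tfrac{\theta_0}{2}\sqrt{n})\ket{n}$ and $\bM_e(-1)\ket{n} = \sin(\tfrac{\theta_0}{2}\sqrt{n})\ket{n-1}$, the last vanishing at $n=0$ (consistently with the convention $\sin(0)/0=1$). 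In each case $\bM_y(u)\rho\bM_y^\dag(u)$ is diagonal, so $V$ evaluated on it is a weighted sum of the numbers $(m-\overline{n})^2$.

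For the bound on $Q_W$ I would use only that $\rho,\rho_{k+1}^g,\rho_{k+1}^e$ are density operators: since $0\le\rho_{nn}\le 1$ and $\sum_n\rho_{nn}=1$, one has $0\le\sum_n\rho_{nn}^2\le 1$, so $W$ takes values in $[-1,0]$ on density operators; the conditional expectation of $W(\rho_{k+1})$ is a convex combination of two such values, hence also in $[-1,0]$, and therefore $|Q_W(\rho,u)| = |W(\rho)-\EE{W(\rho_{k+1})~|~\rho_k=\rho,u_k=u}|\le 1$ for each $u\in\{-1,0,1\}$.

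For the three identities on $Q_V$ I would plug the Fock-state formulas above into $\EE{V(\rho_{k+1})~|~\rho_k=\rho,u_k=u} = \sum_y p_{y,k}V(\rho_{k+1}^y)$, using $V(\sigma)=\sum_n\sigma_{nn}(n-\overline{n})^2$ for diagonal $\sigma$. For $u=0$ the post-measurement diagonal entries are proportional to $\cos^2(\tfrac{\phi_0 n+\phi_R}{2})\rho_{nn}$ and $\sin^2(\tfrac{\phi_0 n+\phi_R}{2})\rho_{nn}$, so after cancelling normalizations and using $\cos^2+\sin^2=1$ one gets $\EE{V(\rho_{k+1})~|~u_k=0}=\sum_n\rho_{nn}(n-\overline{n})^2=V(\rho)$, i.e.\ $Q_V(\rho,0)=0$. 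For $u=+1$ the $g$-branch sends $n\mapsto n+1$ with weight $\sin^2(\tfrac{\theta_0}{2}\sqrt{n+1})$ and the $e$-branch keeps $n$ with weight $\cos^2(\tfrac{\theta_0}{2}\sqrt{n+1})$; expanding $(n+1-\overline{n})^2=(n-\overline{n})^2+2(n-\overline{n})+1$ and again using $\sin^2+\cos^2=1$ gives $\EE{V(\rho_{k+1})~|~u_k=+1}=V(\rho)+\sum_n\rho_{nn}[2(n-\overline{n})+1]\sin^2(\tfrac{\theta_0}{2}\sqrt{n+1})$, which is the asserted formula for $Q_V(\rho,+1)$. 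The case $u=-1$ is symmetric: the $e$-branch sends $n\mapsto n-1$ with weight $\sin^2(\tfrac{\theta_0}{2}\sqrt{n})$ (the $n=0$ term dropping out harmlessly), the $g$-branch keeps $n$ with weight $\cos^2(\tfrac{\theta_0}{2}\sqrt{n})$, and expanding $(n-1-\overline{n})^2=(n-\overline{n})^2-2(n-\overline{n})+1$ yields $Q_V(\rho,-1)=\sum_n\rho_{nn}[2(n-\overline{n})-1]\sin^2(\tfrac{\theta_0}{2}\sqrt{n})$.

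None of these steps is genuinely difficult: all sums are finite because $\rho\in D_*$, and Proposition~\ref{properties-pk} already guarantees $\rho_{k+1}\in D_*$, so $V(\rho_{k+1})$ is well defined even for $u=+1$, where the support may grow by one. The only points needing a little care are the $n=0$ boundary behaviour of $\bM_e(-1)$ together with the $\sin(0)/0=1$ convention, and the cancellation of normalizations, which is legitimate because whenever $p_{y,k}=0$ the outcome $y$ is excluded from the expectation and the displayed formulas still hold as stated. I expect the modest bookkeeping of the index shift -- aligning $\sum_n(\rho_{k+1}^g)_{nn}(n-\overline{n})^2$ with $\sum_n\rho_{nn}(n+1-\overline{n})^2$ for $u=+1$, and its analogue for $u=-1$ -- to be the only place where one must be attentive.
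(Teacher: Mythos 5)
Your proposal is correct and takes essentially the same route as the paper: the bound $|Q_W(\rho,u)|\leq 1$ is proved exactly as in Appendix~E (convexity together with $W$ taking values in $[-1,0]$ on density operators), and the three $Q_V$ identities follow from the same direct evaluation of the conditional expectation of $V(\rho_{k+1})$ with the normalizations cancelling, the paper merely organizing the index-shift bookkeeping through the operator differences $d(\bN+1)-d(\bN)$ and $d(\bN-1)-d(\bN)$ (Appendix~C) rather than entrywise as you do. No gaps.
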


The proof of Proposition~\ref{p1} is shown in the sequel. 
\begin{proof}
Let $\epsilon > 0$ and $n_0, r_0, \overline{n} \in \NN$. One has to show that there exists $m_0 > n_0 + r_0 + \overline{n} + 1$ such that, if $\rho \in D_*$ with $n_{length} (\rho) \leq n_0$, then $u=-1$
always maximizes $Q_{V_\epsilon}(\rho, u)$ whenever $n_{max}(\rho) = m_0$. From Lemma~\ref{Lem2} and the fact that $Q_{V_\epsilon} = Q_V + \epsilon Q_W$, to complete the proof it suffices to show that:
\begin{itemize}
	\item  If  $\rho \in D_*$ is such that $n_{length} (\rho) \leq n_0$ and $n_{max} (\rho) \geq n_0 + \overline{n}$, then $Q_V (\rho, +1) \leq 0$; \smallskip
	\item There exists $m_0 > n_0 + r_0  + \overline{n} +1$ such that $Q_V (\rho, -1) > 2 \epsilon$, whenever $\rho \in D_*$ is such that $n_{length} (\rho) \leq n_0$ and $n_{max}(\rho) = m_0$.
\end{itemize}
Note that
\[
	Q_V (\rho, +1) = - \hspace{-8pt} \sum_{n = n_{min} (\rho)}^{n_{max}(\rho)} \hspace{-2pt}  \rho_{nn}  [ 2(n - \overline{n}) +1 ] \sin^2 \left( \tfrac{\theta_0}{2} \sqrt{n+1} \right),
\]
for any $\rho \in D_*$. Thus, if $n_{length} (\rho) \leq n_0$ and $n_{max}(\rho) \geq \overline{n} + n_0$, then
$n_{min}(\rho) \geq \overline{n} $, and hence the first claim is shown.

Now, fix $0 < a < 1/2$ and let\footnote{As $N$ is an integer, it follows that $N \geq \overline{n} + 1$.}
$N \geq \frac{1}{2} \left[ \frac{2 \epsilon}{1/2-a} +  2\overline{n} + 1\right]$.
Applying Lemma \ref{Lem1} for $N_0 = n_0 + r_0 + 1$ and such choice of $N$, one gets $\overline{N}  > N$ in which 	
$0 < 1/2 - a \leq \sin^2 \left( \tfrac{\theta_0}{2} \sqrt{n} \right)$,
for $n = \overline{N}, \overline{N}+1, \ldots,\overline{N} + n_0 + r_0$.
Take $m_0 = \overline{N} + n_0 + r_0$. Let $\rho \in D_*$ with $n_{length} (\rho) \leq n_0$ and $n_{max}(\rho) = m_0$. Note that $m_0 > n_0 + r_0 + \overline{n} +1$ and $n_{min} (\rho) \geq \overline{N} + r_0$. From Lemma~\ref{Lem2} and the inequality above for $1/2 - a$, one obtains
\[
\begin{array}{l}
	Q_V (\rho, -1)  =  \displaystyle \sum_{n=n_{min}(\rho)}^{m_0}   \rho_{nn} [ 2(n - \overline{n}) -1 ]  \sin^2 \left( \tfrac{\theta_0}{2} \sqrt{n} \right)\\
   	 \geq   \displaystyle \sum_{n=n_{min}(\rho)}^{m_0} \rho_{nn}  [ 2(n - \overline{n}) -1  ] (1/2-a) \\
   	 \geq \displaystyle  \sum_{n=n_{min}(\rho)}^{m_0} \rho_{nn}  [ 2(\overline{N} - \overline{n}) -1 ] (1/2-a) \\
   	 =  \displaystyle [ 2(\overline{N} - \overline{n}) -1 ](1/2-a) \sum_{n=n_{min}(\rho)}^{m_0} \rho_{nn}.
\end{array}
\]
Using the fact that $\sum_{n=n_{min}(\rho)}^{m_0} \rho_{nn}  =1$ and $\overline{N} > \frac{1}{2} \left[
\frac{2 \epsilon}{1/2-a} + 2\overline{n} + 1\right]$, one shows the second claim, thereby completing the proof of Proposition~\ref{p1}.
\end{proof}

\vspace{0.2cm}

\noindent \textbf{Proof of the Second Step:}\\
Let $\epsilon > 0$. Recall that, by definition, $Q_{V_\epsilon} = Q_V + \epsilon Q_W$. Using the same notation of the First Step, the central idea of the proof is to show that, given $\rho \in D_{m_0}$, one has that $Q_{V_\epsilon}(\rho, f(\rho)) \geq 0$, and that $Q_{V_\epsilon}(\rho, f(\rho))=0$ if and only if $\rho = \overline{\rho}$.
The following lemma is instrumental for the proof of such property. Its proof is presented in Appendix~\ref{aLem3}.
\begin{lem}\label{Lem3}
Assume that $\phi_0/ \pi$ is an irrational number in \eqref{eq:M0}, and take $\phi_R=\pi/2 - \overline{n} \phi_0$, where $\overline{n} \in \NN$. Let $\rho \in D_*$. Then:
\begin{itemize}
	\item $Q_W(\rho,0) \geq 0$, and $Q_W(\rho, 0) =0$ if and only if $\rho = \ket{n} \bra{n}$ for some $n \in \NN$; \smallskip
  	\item $Q_W(\rho,+1) = Q_W(\rho,-1) = 0$ whenever $\rho = \ket{n} \bra{n}$ for some $n \in \NN$.
\end{itemize}
\end{lem}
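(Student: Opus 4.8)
The plan is to handle the two bullets by quite different means: the second bullet (and the ``if'' direction of the first) is an immediate consequence of the fact that the measurement back-action maps a Fock projector to a Fock projector, whereas the ``only if'' direction for $u=0$ is the real content, and I would obtain it from a term-by-term Cauchy--Schwarz estimate together with an elementary trigonometric argument exploiting that $\phi_0/\pi$ is irrational and $\phi_R=\pi/2-\overline{n}\phi_0$.

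First I would dispose of the Fock-state cases. For every $u\in\{-1,0,1\}$ and every $n\in\NN$, the explicit formulas \eqref{eq:M0}--\eqref{eq:Mm} show that $\bM_g(u)\ket{n}$ and $\bM_e(u)\ket{n}$ are each a scalar multiple of a single Fock state (one of $\ket{n}$, $\ket{n+1}$, $\ket{n-1}$, or the zero vector). Hence, starting from $\rho=\ket{n}\bra{n}$, each of the two successors $\rho_{k+1}^g,\rho_{k+1}^e$ that can actually occur is again a one-dimensional Fock projector, so $W(\rho_{k+1})=-1=W(\rho)$ with probability one. Consequently $\EE{W(\rho_{k+1})~|~\rho_k=\ket{n}\bra{n},u_k=u}=W(\ket{n}\bra{n})$, i.e.\ $Q_W(\ket{n}\bra{n},u)=0$ for \emph{every} $u$; in particular $Q_W(\rho,\pm1)=0$ when $\rho=\ket{n}\bra{n}$, and likewise the ``if'' part of the equivalence $Q_W(\rho,0)=0\Leftrightarrow\rho=\ket{n}\bra{n}$.

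Next, fix $\rho\in D_*$ and take $u=0$. Writing $c_n=\cos\!\bigl(\tfrac{\phi_0 n+\phi_R}{2}\bigr)$, $s_n=\sin\!\bigl(\tfrac{\phi_0 n+\phi_R}{2}\bigr)$, one has $c_n^2+s_n^2=1$, and since $\bM_g(0),\bM_e(0)$ are diagonal, $[\bM_g(0)\rho\bM_g^\dag(0)]_{nn}=c_n^2\rho_{nn}$, $[\bM_e(0)\rho\bM_e^\dag(0)]_{nn}=s_n^2\rho_{nn}$, whence $p_g:=\tr{\bM_g(0)\rho\bM_g^\dag(0)}=\sum_n c_n^2\rho_{nn}$ and $p_e:=1-p_g=\sum_n s_n^2\rho_{nn}$. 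I would first record that $c_n\neq0$ and $s_n\neq0$ for all $n$: with $\phi_R=\pi/2-\overline{n}\phi_0$, the equation $c_n=0$ (resp.\ $s_n=0$) forces $\phi_0(n-\overline{n})\in\tfrac{\pi}{2}+2\pi\ZZ$ (resp.\ $\in-\tfrac{\pi}{2}+2\pi\ZZ$), which is impossible for $n=\overline{n}$ and makes $\phi_0/\pi$ rational for $n\neq\overline{n}$; hence $0<p_g<1$ for every $\rho\in D_*$. A direct computation from the definitions of $W$, $Q_W$ and the Markov transition then gives
\[
Q_W(\rho,0)=\sum_{n\in\NN}\left(\frac{(c_n^2\rho_{nn})^2}{p_g}+\frac{(s_n^2\rho_{nn})^2}{p_e}-\rho_{nn}^2\right),
\]
and applying the elementary inequality $\tfrac{a^2}{s}+\tfrac{b^2}{t}\ge\tfrac{(a+b)^2}{s+t}$ (valid for $s,t>0$, equality iff $a/s=b/t$) term by term with $a=c_n^2\rho_{nn}$, $b=s_n^2\rho_{nn}$, $s=p_g$, $t=p_e$, each summand is nonnegative, so $Q_W(\rho,0)\ge0$.

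Finally, for the ``only if'' direction, $Q_W(\rho,0)=0$ forces every summand to vanish; for each $n$ with $\rho_{nn}>0$ the equality condition reads $c_n^2/p_g=s_n^2/p_e$, which (using $c_n^2+s_n^2=1=p_g+p_e$) is equivalent to $c_n^2=p_g$. Thus if two distinct indices $n_1\neq n_2$ lay in the diagonal support of $\rho$, then $\cos^2\!\bigl(\tfrac{\phi_0 n_1+\phi_R}{2}\bigr)=\cos^2\!\bigl(\tfrac{\phi_0 n_2+\phi_R}{2}\bigr)$; doubling angles, either $\phi_0(n_1-n_2)\in2\pi\ZZ$ or $\phi_0(n_1+n_2)+2\phi_R\in2\pi\ZZ$. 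In the first case, since $n_1\neq n_2$, this makes $\phi_0/\pi$ rational; in the second, substituting $\phi_R=\pi/2-\overline{n}\phi_0$ yields $\phi_0(n_1+n_2-2\overline{n})\in\pi+2\pi\ZZ$, so either $n_1+n_2\neq2\overline{n}$ and $\phi_0/\pi$ is again rational, or $n_1+n_2=2\overline{n}$, forcing the impossible relation $0\in\pi+2\pi\ZZ$. Each possibility contradicts the hypotheses, so the diagonal support of $\rho$ is a single point $n$, and since $\rho\in D_*$ has unit trace, $\rho=\ket{n}\bra{n}$. I expect this trigonometric dichotomy — and correctly using the specific value of $\phi_R$ to kill the $A+B$ alternative — to be the only delicate step; the nonnegativity of $Q_W(\rho,0)$ itself is just term-by-term Cauchy--Schwarz.
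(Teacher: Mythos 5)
Your proof is correct and follows essentially the same route as the paper: nonnegativity of $Q_W(\rho,0)$ comes from a termwise convexity (variance/Cauchy--Schwarz) inequality on the diagonal entries, the equality case forces $\cos^2\bigl(\tfrac{\phi_0 n_1+\phi_R}{2}\bigr)=\cos^2\bigl(\tfrac{\phi_0 n_2+\phi_R}{2}\bigr)$ for two distinct indices in the support, which is ruled out by the irrationality of $\phi_0/\pi$ together with $\phi_R=\pi/2-\overline{n}\phi_0$, and the claims at Fock states follow because each measurement operator sends $\ket{n}\bra{n}$ to a multiple of a Fock projector. The only (harmless) differences are presentational: you package the positivity as Titu's lemma applied to the unnormalized expression instead of the paper's identity $Q_W(\rho,0)=\alpha(1-\alpha)\sum_n(\rho^g_{nn}-\rho^e_{nn})^2$, and you dispose of the degenerate outcomes $p_g\in\{0,1\}$ up front by noting $c_n,s_n\neq 0$, whereas the paper treats $\alpha=0,1$ by a separate case analysis.
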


One has that $m_0 > \overline{n}$, and $(\theta_0 / \pi)^2$ is an irrational number by assumption. Recall that $\sin^2(x) = 0$ if and only if $x=\ell \pi$, where $\ell$ is an integer.
First we show that $Q_{V_\epsilon}(\overline{\rho}, f(\overline{\rho})) = 0$. By Lemma~\ref{Lem2}: $Q_V(\overline{\rho}, +1) = -\sin^2 ( \tfrac{\theta_0}{2} \sqrt{\overline{n}+1}) < 0$; $Q_V(\overline{\rho}, -1) = -\sin^2(\tfrac{\theta_0}{2} \sqrt{\overline{n}}) < 0$ when $\overline{n} > 0$, and $Q_V(\overline{\rho}, -1)=0$ when $\overline{n} =0 $; and $Q_V(\overline{\rho}, 0) =0$. As $Q_W(\overline{\rho}, u) =0$, for
$u \in \{-1, 0 ,1 \}$, and $u=f(\overline{\rho})$ maximizes $Q_{V_\epsilon}(\overline{\rho}, u)$, one has that $Q_{V_\epsilon}(\overline{\rho}, f(\overline{\rho})) =0$.

Now, let $\rho \in D_{m_0} \subset D_*$. Since $u=f(\rho)$ maximizes $Q_{V_\epsilon}(\rho,u)$, it follows that
\[
   Q_{V_\epsilon}(\rho, f(\rho)) \geq Q_{V}(\rho, 0) + \epsilon Q_{W} (\rho, 0) = \epsilon Q_{W} (\rho, 0) \geq 0.
\]
Suppose $Q_{V_\epsilon}( \rho, f(\rho)) =0$. Hence, $Q_{W} (\rho, 0) = 0$, and so $\rho = \ket{n} \bra{n}$ for some $n \in \{0, 1, \ldots, m_0\}$.
It suffices to show that $Q_{V_\epsilon}(\rho, f(\rho))|_{\ket{n}\bra{n}} > 0$ for $n \in  \{0, 1, \ldots, m_0\}$ with $n \neq \overline{n}$.
Assume that $n > \overline{n}$.  It is clear that $u =f(\ket{n} \bra{n}) = -1$ and
$Q_{V_\epsilon}(\rho, f(\rho))|_{\ket{n}\bra{n}} =  [2(n-\overline{n}) - 1] \sin^2(\tfrac{\theta_0}{2} \sqrt{n})  > 0$. Assume now that $n < \overline{n}$.
Then, $u =f(\ket{n} \bra{n}) = +1$ and $Q_{V_\epsilon}(\rho, f(\rho))|_{\ket{n}\bra{n}} = -[2(n-\overline{n})+1] \sin^2(\tfrac{\theta_0}{2} \sqrt{n+1})>0$. This completes the proof of the referred property.

The remaining part of the proof of the Second Step is a straightforward consequence of the standard stochastic convergence result below:
\begin{thm}\cite[Theorem~1, p.\ 195]{kushner-71}\label{convtheorem}
Let $\Omega$ be a probability space and let $W$ be a measurable space. Consider that
$X_k$:~$\Omega \rightarrow W$, $k \in \NN$, is a Markov chain with respect
to the natural filtration. Let $Q$:~$W \rightarrow \RR$ and $V$:~$W \rightarrow \RR$ be
measurable non-negative functions with $V(X_k)$ integrable for all
$k \in \NN$. If
$\EE{V(X_{k+1}) ~|~ X_k } - V(X_k) = - Q(X_k)$, for  $k \in \NN$,
then $\lim_{k \to \infty} Q(X_k) = 0$ almost surely.
\end{thm}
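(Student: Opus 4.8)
The plan is to read the one-step identity as a non-negative supermartingale relation and then deduce the almost-sure vanishing of $Q(X_k)$ from a summability estimate, following the classical Robbins--Siegmund route. Let $\mathcal{F}_k = \sigma(X_0, \dots, X_k)$ be the natural filtration. Since $(X_k)$ is Markov, $\EE{V(X_{k+1}) \mid X_k} = \EE{V(X_{k+1}) \mid \mathcal{F}_k}$, so the hypothesis rewrites as $\EE{V(X_{k+1}) \mid \mathcal{F}_k} = V(X_k) - Q(X_k)$. As $Q \geq 0$, this exhibits $(V(X_k))$ as a non-negative supermartingale adapted to $(\mathcal{F}_k)$, each term being integrable by assumption.

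First I would take total expectations and apply the tower property $\EE{\EE{V(X_{k+1}) \mid \mathcal{F}_k}} = \EE{V(X_{k+1})}$, which is legitimate because every $V(X_k)$ is integrable. This gives $\EE{V(X_{k+1})} - \EE{V(X_k)} = -\EE{Q(X_k)}$ for all $k$. Summing this telescoping identity from $k = 0$ to $N-1$ and using $V \geq 0$ yields
\[
\sum_{k=0}^{N-1} \EE{Q(X_k)} = \EE{V(X_0)} - \EE{V(X_N)} \leq \EE{V(X_0)}.
\]
Letting $N \to \infty$, I obtain the uniform bound $\sum_{k=0}^{\infty} \EE{Q(X_k)} \leq \EE{V(X_0)} < \infty$.

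Next I would transfer this summability in mean to the individual trajectories. Because $Q \geq 0$, Tonelli's theorem justifies interchanging sum and expectation, so $\EE{\sum_{k=0}^{\infty} Q(X_k)} = \sum_{k=0}^{\infty} \EE{Q(X_k)} < \infty$. A non-negative random variable with finite expectation is almost surely finite, hence $\sum_{k=0}^{\infty} Q(X_k) < \infty$ almost surely. The general term of a convergent series tends to zero, and therefore $\lim_{k \to \infty} Q(X_k) = 0$ almost surely, as claimed.

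I do not expect a substantial obstacle here: the result is a standard stochastic-Lyapunov estimate, and the only points needing care are measure-theoretic routine --- invoking the tower property under the stated integrability, and replacing Fubini by Tonelli thanks to the sign condition $Q \geq 0$. I note that Doob's supermartingale convergence theorem would, in addition, guarantee that $V(X_k)$ converges almost surely to an integrable limit; however, this stronger fact is not required for the stated conclusion, which rests entirely on the summability bound above.
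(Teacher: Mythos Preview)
Your argument is correct and is the standard Robbins--Siegmund/supermartingale route to this classical stochastic Lyapunov result. Note, however, that the paper does not actually prove this theorem: it is quoted verbatim from Kushner's book (Theorem~1, p.~195) and invoked as a black box to conclude the Second Step of the proof of Theorem~\ref{thm:IdealConvergence}. There is therefore no ``paper's own proof'' to compare against; your self-contained derivation via telescoping expectations and Tonelli is precisely the kind of elementary argument one would supply if asked to justify the cited result.
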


Indeed, let $\mathcal{J}_1$ be the complex Banach space of all trace-class operators on $\mathcal{H}$ with the trace norm $\| \cdot \|_1$, that is, $\| B \|_1 = \tr{|B|}$, where $| B | \triangleq \sqrt{B^\dag B}$, for $B \in \mathcal{J}_1$. Recall that $\| B \| \leq \| B \|_1$ and\footnote{One also recalls that if $A$ is a bounded operator on $\mathcal{H}$ and $B \in \mathcal{J}_1$, then $AB, BA \in \mathcal{J}_1$ with $\tr{AB}=\tr{BA}$.} $| \tr{A B} | \leq \|A\| \| B \|_1$, for every $B \in \mathcal{J}_1$ and each bounded operator $A$:~$\mathcal{H} \rightarrow \mathcal{H}$, where $\| \cdot \|$ is the usual operator norm (\emph{sup} norm of bounded operators) \cite{reed-simon-1, conway-book}.
Consider the subspace topology on $D_{m_0}$ with respect to $\mathcal{J}_1$. One has that the closed-loop trajectory $\rho_k$, $k \in \NN$, is a Markov chain with phase space $D_{m_0}$ (with respect to the natural filtration and the Borel algebra on $D_{m_0}$). It is clear that $D_{m_0}$ is compact, and that $Q_{\epsilon}$ and $V_{\epsilon} - \alpha_\epsilon$ are non-negative and continuous on $D_{m_0}$, for all $\epsilon > 0$, where $\alpha_\epsilon \triangleq \min_{\rho \in D_{m_0}} V_\epsilon(\rho)$.
The theorem above implies that $\rho_k$ converges almost surely towards $\overline{\rho}$ as $k \rightarrow \infty$ (with respect to the trace norm). This completes the proof of Theorem~\ref{thm:IdealConvergence}.

\section{CONCLUDING REMARKS}\label{conclusions}

This paper provided a convergence analysis of Fock states stabilization via single-photon corrections under an ideal set-up, that is, assuming perfect measurement detection and no control delays. In terms of convergence speed, the simulation results here presented have justified the inclusion of the term $- \epsilon \sum_{n \in \NN} \rho_{nn}^2$ in the Lyapunov-based feedback law \eqref{lyapfundef}--\eqref{eq:feedback}. It is straightforward to verify that the convergence analysis developed in this paper remains valid  for: (i) any other function $d(n)$ in \eqref{lyapfundef} satisfying $d(\overline{n})=0$, $d(n)$ is increasing for $n > \overline{n}$ and $d(n)$ is decreasing for $n < \overline{n}$; and (ii) $\epsilon > 0$ dependent on $n$, that is, to take the term $- \sum_{n \in \NN} \epsilon_{n} \rho_{nn}^2$. However, it is an open problem how to choose the function $d(n)$ and the gains $\epsilon_n > 0$ so as to achieve the best convergence speed.

Finally, the feedback law used in \cite{zhouPRL2012}, which corresponds to $\epsilon = 0$, was tailored for an experimental set-up with measurement imperfections and control delays. The convergence analysis of such realistic situation will be investigated in the future.


\section{ACKNOWLEDGMENTS}
The authors are indebted I.\ Dotsenko,  M.\ Brune and J.\ M.\ Raimond for valuable discussions on the experimental feedback scheme.

\appendix

\subsection{Basic properties of the operators $\bN$, $\ba$ and $\ba^\dag$}\label{basicoperators}


Fix $n^* \in \NN$ and let  $\mathcal{H}_{n^*}=\mbox{span}\{\ket{0}, \dots, \ket{n^*}\}$. Consider the (linear) operators $\bN$:~$\mathcal{H}_{n^*} \rightarrow \mathcal{H}_{n^*}$, $\ba$:~$\mathcal{H}_{n^*} \rightarrow \mathcal{H}_{n^*-1} \subset \mathcal{H}_{n^*}$, $\ba^\dag$:~$\mathcal{H}_{n^*} \rightarrow \mathcal{H}_{n^*+1}$
defined respectively as $
\bN \ket{n}= n~\ket{n}$,
$\ba \ket{0}=0$,  $\ba \ket{n} =  \sqrt{n}~\ket{n-1}$ for $n\geq 1$, $ \ba^\dag \ket{n} =\sqrt{n+1}~\ket{n+1}$. Note that these operators cannot be extended to $\mathcal{H}$.
Let $f$:~$\NN \rightarrow \RR$ be a function. Define
the operator $f(\bN)$:~$\mathcal{H}_{n^*} \rightarrow \mathcal{H}_{n^*}$ by
$f(\bN) \ket{n} = f(n) \ket{n}$, for each $n=0,\dots,n^*$.
It is clear that $f(\bN)$ can be extented to $\mathcal{H}$ whenever $f$ is a bounded function.
Given $f$:~$\NN \rightarrow \RR$ and an integer $m$, one defines $g$:~$\NN \rightarrow \RR$ as:
$g(n) = f(n+m)$, when $n + m \geq 0$; and $g(n)=0$, when $n+m < 0$. One abuses notation letting
$f(\bN+m)$ stand for $g(\bN)$. Given two functions $f,g$:~$\NN \rightarrow \RR$, it is clear that $f(\bN) g(\bN) = g(\bN) f(\bN) =
(f g) (\bN)$ and $(f+g)(\bN) = f(\bN) + g(\bN)$.
Furthermore: $\ba \ba^\dag  =  \bN+\bf I$, $\ba^\dag \ba =  \bN$, $\ba f(\bN)  =  f(\bN+1) \ba$, $\ba^\dag f(\bN)  =  f(\bN-1) \ba^\dag$.

\subsection{Proof of Proposition~\ref{properties-pk}}\label{proof-properties-pk}

Fix any $\rho \in D_*$ and let   $n \in \mathbb{N}$. In particular, $\rho \ket{n} = \rho_{nn} \ket{n}$.
It then follows from \eqref{eq:M0}--\eqref{eq:Mm} that:
\[
\begin{array}{l}
	\bM_g(0) \rho \bM_g^\dag(0)\ket{n} = \rho_{nn} \cos^2 \left( \tfrac{\phi_0 n + \phi_R}{2} \right) \ket{n}, \smallskip \\
	\bM_e(0) \rho \bM_e^\dag(0)\ket{n} = \rho_{nn} \sin^2 \left( \tfrac{\phi_0 n + \phi_R}{2} \right)  \ket{n}, \smallskip \\
	\bM_g(+1) \rho \bM_g^\dag(+1)\ket{n} \hspace{-2pt} = \hspace{-2pt}
		\left\{
		\begin{array}{l}
			\hspace{-6pt} 0, \mbox{ for } n = 0, \smallskip \\
			\hspace{-6pt} \rho_{n-1,n-1} \hspace{-2pt}  \sin^2 \left( \tfrac{\theta_0}{2}\sqrt{n} \right) \ket{n}, n \geq 1,
		\end{array}
		\right. \smallskip \\			
	\bM_e(+1) \rho \bM_e^\dag(+1) \ket{n} = \rho_{nn} \cos^2 \left( \tfrac{\theta_0}{2}\sqrt{n+1} \right)  \ket{n}, \smallskip \\
	\bM_g(-1) \rho \bM_g^\dag(-1)\ket{n} = \rho_{nn} \cos^2 \left( \tfrac{\theta_0}{2}\sqrt{n} \right)  \ket{n}, \smallskip \\
	\bM_e(-1) \rho \bM_e^\dag(-1)\ket{n} = \rho_{n+1,n+1}\sin^2 \left( \tfrac{\theta_0}{2}\sqrt{n+1} \right)  \ket{n}.
\end{array}
\]
Therefore:
{\small
\begin{align}
	\label{zg} & \bM_g(0) \rho \bM_g^\dag(0) =   \sum_{n=n_{min}(\rho)}^{n_{max}(\rho)} \rho_{nn} \cos^2 \left( \tfrac{\phi_0 n + \phi_R}{2} \right) \ket{n} \bra{n}, \\
	\label{ze} & \bM_e(0) \rho \bM_e^\dag(0) =  \sum_{n=n_{min}(\rho)}^{n_{max}(\rho)} \rho_{nn} \sin^2 \left( \tfrac{\phi_0 n + \phi_R}{2} \right) \ket{n} \bra{n}, \\
	\label{pg} & \bM_g(+1) \rho \bM_g^\dag(+1) =  \hspace{-16pt} \sum_{n=n_{min}(\rho)+1}^{n_{max}(\rho)+1} \hspace{-16pt} \rho_{n-1,n-1}\sin^2 \left( \tfrac{\theta_0}{2}\sqrt{n} \right) \ket{n} \bra{n}, \\
	\label{pe} & \bM_e(+1) \rho \bM_e^\dag(+1) =  \hspace{-8pt}  \sum_{n=n_{min}(\rho)}^{n_{max}(\rho)} \hspace{-8pt}  \rho_{nn} \cos^2 \left( \tfrac{\theta_0}{2}\sqrt{n+1} \right) \ket{n} \bra{n}, \\
	\label{mg} & \bM_g(-1) \rho \bM_g^\dag(-1) =  \sum_{n=n_{min}(\rho)}^{n_{max}(\rho)} \rho_{nn} \cos^2 \left( \tfrac{\theta_0}{2}\sqrt{n} \right) \ket{n} \bra{n}, \\
	\label{me} & \bM_e(-1) \rho \bM_e^\dag(-1) = \hspace{-32pt}  \sum_{n=\max\{0,n_{min}(\rho)-1\}}^{n_{max}(\rho)-1} \hspace{-32pt}  \rho_{n+1,n+1}\sin^2 \left( \tfrac{\theta_0}{2}\sqrt{n+1} \right) \ket{n} \bra{n}.
\end{align}
}By assumption, $\rho_0 \in D_*$. Then, \eqref{eq:IdealMarkovModel}, \eqref{zg}--\eqref{me} above and induction on $k$ show the assertions in Proposition~\ref{properties-pk}.

\subsection{Computation of $Q_V(\rho, u)$}\label{aQV}
Fix any $\rho \in D_*$ and $\overline{n} \in \NN$. Recall that  $V (\rho ) = \tr {d (\bN) \rho}$, where $d$:~$\NN \rightarrow \RR$ be given by $d(n) = (n - \overline {n})^2$.
Note that \eqref{eq:IdealMarkovModel} implies that, for each $u \in \{-1,0,1\}$,
\begin{equation}\label{generalexpectation}
	 \begin{array}{l}
	 	\EE{V(\rho_{k+1})~|~\rho_k =\rho, u_k=u} \smallskip \\
		=  \tr{ \hspace{-2pt} d(\bN) \bM_g(u) \rho  \bM^\dag_g(u) \hspace{-2pt}} \hspace{-1pt} + \hspace{-1pt} \tr{\hspace{-2pt} d(\bN) \bM_e(u) \rho \bM^\dag_e(u)\hspace{-2pt}}\hspace{-2pt}.
	\end{array}
\end{equation}

Take $u=0$. From  \eqref{zg}--\eqref{ze} in Appendix~\ref{proof-properties-pk}, one has
\begin{align*}
	& \EE{V(\rho_{k+1})~|~\rho_k =\rho, u_k=0} \\
	& = \tr{ \hspace{-1pt}d(\bN) \bM_g(0) \rho  \bM^\dag_g(0)\hspace{-1pt}}  +  \tr{\hspace{-1pt} d(\bN) \bM_e(0) \rho \bM^\dag_e(0) \hspace{-1pt}} \\
	&  = \tr{ d(\bN) \left[ \bM_g(0) \rho  \bM^\dag_g(0) + \bM_e(0) \rho \bM^\dag_e(0)\right]} \\
	&  = \tr{ d(\bN) \rho} = V(\rho).
\end{align*}
In particular,
\begin{equation} \label{QV0}
	Q_V( \rho, 0) = 0.
\end{equation}

Now, take $u=+1$. Then, \eqref{generalexpectation} above and \eqref{pg}--\eqref{pe} in Appendix~\ref{proof-properties-pk} provide that
\begin{align*}
	& \EE{V(\rho_{k+1})~|~\rho_k=\rho, u_k=+1} \\
	& =  \tr{   {\sin^2\left(\tfrac{\theta_0}{2} \sqrt{\bN +1} \right) } d(\bN +1 )  \rho } \\
        & \, + \tr{{\cos^2\left(\tfrac{\theta_0}{2} \sqrt{\bN+1} \right)} d(\bN) \rho}.
\end{align*}
By summing and subtracting $\tr{{\sin^2\left(\tfrac{\theta_0}{2} \sqrt{\bN +1 } \right)} d(\bN) \rho}$,
\begin{align*}
	& \EE{V(\rho_{k+1})~|~\rho_k=\rho, u_k=+1} \\
	&  =  \tr{   d(\bN) \rho}  \\
	& \, + \tr{{\sin^2\left(\tfrac{\theta_0}{2} \sqrt{\bN +1} \right)  \left[ d(\bN +1) - d(\bN)  \right] \rho}} \\
        & = V(\rho) + \tr{{\sin^2\left(\tfrac{\theta_0}{2} \sqrt{\bN +1} \right)  \left[ d(\bN +1) - d(\bN) \right] \rho}}.
\end{align*}
In particular,
\begin{align}
\nonumber	& Q_V(\rho, +1) \hspace{-1pt} = \hspace{-1pt} - \hspace{-1pt} \tr{\hspace{-1pt} {\sin^2 \hspace{-2pt} \left(\tfrac{\theta_0}{2} \sqrt{\bN +1} \right) \hspace{-2pt} \left[ d(\bN +1) - d(\bN) \right] \hspace{-1pt} \rho}\hspace{-2pt}} \hspace{-1pt}\hspace{-1pt}, \\
\label{QVplus}  & = - \sum_{n \in \NN} \rho_{nn} \left[2(n - \overline{n}) + 1 \right]  \sin^2\left(\tfrac{\theta_0}{2} \sqrt{n +1} \right).
\end{align}


Finally, take $u=-1$. Using \eqref{generalexpectation} above and \eqref{mg}--\eqref{me} in Appendix~\ref{proof-properties-pk}, $\EE{V(\rho_{k+1})~|~\rho_k=\rho, u_k=-1} =  \tr{\hspace{-2pt} \sin^2 \hspace{-2pt} \left(\tfrac{\theta_0}{2} \sqrt{\bN} \right) \hspace{-2pt}  d(\bN-1)  \rho \hspace{-2pt}} +  \tr{\hspace{-2pt} {\cos^2 \hspace{-2pt} \left(\tfrac{\theta_0}{2} \sqrt{\bN} \right)} \hspace{-1pt} d(\bN) \rho \hspace{-2pt}}$.
By summing and subtracting $\tr{\sin^2\left(\tfrac{\theta_0}{2} \sqrt{ \bN } \right) d(\bN) \rho}$,
\begin{align*}
	& \EE{V(\rho_{k+1})~|~\rho_k=\rho, u_k=-1} \\
	& =  \tr{   d(\bN) \rho}  + \tr{{\sin^2\left(\tfrac{\theta_0}{2} \sqrt{\bN} \right)  \left[ d(\bN -1) - d(\bN) \right]} \rho} \\
	& = V(\rho) + \tr{{\sin^2\left(\tfrac{\theta_0}{2} \sqrt{\bN} \right)  \left[ d(\bN -1) - d(\bN) \right] \rho}}.
\end{align*}
In particular,
\begin{align}
\nonumber	Q_V(\rho, -1) &= -\tr{{\sin^2\left(\tfrac{\theta_0}{2} \sqrt{\bN} \right)  \left[ d(\bN - 1) - d(\bN) \right] \rho}}, \\
\label{QVneg}                         &= \sum_{n \in \NN} \rho_{nn} \left[2(n - \overline{n}) - 1 \right]  \sin^2\left(\tfrac{\theta_0}{2} \sqrt{n} \right).
\end{align}

\subsection{Proof of Lemma \ref{Lem1}}
 \label{aLem1}

Assume that $N_0$ is even (otherwise one may take $N_0+1$ instead of $N_0$ in this proof).
Define the function $\eta$:~$\NN \rightarrow \RR$ by
\begin{equation}\label{etadef}
   \eta(\ell) = \left[ \frac{2}{\theta_0} \left( \ell \frac{\pi}{2} + \frac{\pi}{4} \right) \right]^2.
\end{equation}
By definition, one has $\frac{\theta_0}{2}\sqrt{\eta(\ell)} = \ell \frac{\pi}{2} + \frac{\pi}{4}$
for all $\ell \in \NN$.
Let $h = \pi/4 - \arcsin \left(\sqrt{1/2-a} \right)$. Using the definition of $h$ and the symmetries\footnote{More precisely, $\sin^2(\pi/2 - x) = 1 - \cos^2(\pi/2 - x) = 1 - \sin^2(x)$.} of the function $\sin^2(\cdot)$, it is easy to
show that
\begin{equation}\label{eh}
	1/2 - a \leq \sin^2(x+\pi/4) \leq a+1/2, \quad \forall x \in [-h, h].
\end{equation}
Let $\overline{\ell} \in \NN$ be even and big enough such
that the following two conditions are simultaneously met:
\begin{equation}\label{ineqN}
	\eta(\overline{\ell})  >  N_0/2 + N,  \qquad
	\frac{1}{8} {\theta_0 N_0} / {\sqrt{\eta(\overline{\ell}) - N_0/2}} \leq  h.
\end{equation}
Now, take $\overline{N} = \lfloor \eta(\overline{\ell}) \rceil - \frac{N_0}{2} + 1 > N$, where
$\lfloor \eta \rceil$ denotes the greatest integer which is less or equal to $\eta$.
By construction, $\eta(\overline{\ell})$ is in-between the points
$\overline{N}+N_0/2 -1$ and $\overline{N} + N_0/2$, and hence it is in the interval $[\overline{N}, \overline{N} + N_0-1]$. Then, for $n=\overline{N}, \dots, \overline{N} + N_0-1$, one has that
$| n - \eta(\overline{\ell}) | < N_0/2 $. Consider the function $\phi(x) = \frac{\theta_0}{2} \sqrt{x}$. From the fact that $\phi^\prime (x)=  \frac{\theta_0}{4\sqrt{x}}$,
by the mean value theorem applied to the function $\phi$ and the second inequality in \eqref{ineqN}, one obtains
  \[
     \left| \frac{\theta_0}{2} \sqrt{n}  -  \frac{\theta_0}{2} \sqrt{\eta ( \overline{\ell} )} \right| < h , \quad \mbox{for } n = \overline{N}, \dots, \overline{N} + N_0-1.
  \]
Then, the proof follows easily from (\ref{etadef}), (\ref{eh}) and the fact that $\sin^2(x-\ell\pi/2) = \sin^2(x)$, for every even $\ell \in \NN$.

\subsection{Proof of Lemma \ref{Lem2}}\label{aLem2}

\noindent \textbf{Proof of the first claim:}  Let $u \in \{-1, 0, 1\}$, $\rho \in D_*$. Recall that $W(\rho) = - \sum_{n \in \NN} \rho_{nn}^2$. Since
$\tr{\rho} =  \sum_{n \in \NN} \rho_{nn} = 1$, then $-1 = - \sum_{n \in \NN} \rho_{nn} \leq W(\rho)  \leq 0$.
Now, by \eqref{eq:IdealMarkovModel},  $\EE{W(\rho_{k+1}) ~|~ \rho_k=\rho, u_k = u} = p_{g,k} W(\rho^{g}_{k+1}) +  p_{e,k} W(\rho^{e}_{k+1})$,
where $p_{g,k}, p_{e,k} \geq 0$ with $p_{g,k} + p_{e,k} = 1$. Thus $-1 \leq \EE{W(\rho_{k+1}) ~|~ \rho_k=\rho, u_k = u} \leq 0$.
Since $Q_W(\rho, u)$ is the difference of two numbers that are in-between $-1$ and $0$, one concludes that $|Q_W(\rho, u)| \leq 1$.

The second, third and fourth claims, are immediate from \eqref{QV0}, \eqref{QVplus} and \eqref{QVneg} in Appendix~\ref{aQV}, respectively.

\subsection{Proof of Lemma \ref{Lem3}}\label{aLem3}

\noindent \textbf{Proof of the first claim:}
Let $\rho \in D_{m_0}$. By \eqref{zg}--\eqref{ze} in Appendix~\ref{proof-properties-pk}, $\bM_g(0) \rho \bM_g^\dag(0) + \bM_e(0) \rho \bM_e^\dag(0) = \rho$.
Taking $\rho_k=\rho$ in $u_{k}=0$ in \eqref{eq:IdealMarkovModel}, define
\begin{align*}
	& \rho^y \triangleq \rho^y_{k+1} = \dfrac{\bM_y(0) \rho \bM_y^\dag(0)}{\tr{\bM_y(0) \rho \bM_y^\dag(0) }}, \quad \mbox{for } y=g,e.
\end{align*}
Hence, $\alpha \rho^g + (1-\alpha) \rho^e= \rho$, where
$\alpha \triangleq p_{g,k} = \tr{\bM_g(0) \rho \bM_g^\dag(0)}$. In particular,
$\alpha \rho^g_{nn} + (1-\alpha) \rho^e_{nn} = \rho_{nn}$, for $n \in \NN$. Note that, if $\alpha = 0$, then $\bM_g(0) \rho \bM_g^\dag(0) = 0$, and so $\rho^e=\rho$. Similarly, $\alpha = 1$ implies $\rho^g=\rho$. Thus, the identity
$\alpha \rho^g_{nn} + (1-\alpha) \rho^e_{nn} = \rho_{nn}$, for $n \in \NN$, still holds when $\alpha = 0$ or $\alpha = 1$.
From \eqref{eq:IdealMarkovModel}, \eqref{defVW} and $\alpha=p_{g,k}$, one has
\begin{align}\label{convexity}
\nonumber	 & Q_{W}(\rho,0)  = W(\rho) - \big[ p_{g,k} W(\rho^{g}_{k+1}) +  p_{e,k} W(\rho^{e}_{k+1}) \big]  \smallskip \\
\nonumber         & =  \sum_{n \in \NN} \alpha \left( \rho^g_{nn} \right)^2 + (1-\alpha)  \left( \rho^e_{nn} \right)^2 - \big[ \alpha  \rho^g_{nn} + (1-\alpha) \rho^e_{nn} \big]^2  \smallskip  \\
			 & = \alpha (1-\alpha) \sum_{n \in \NN}  \big[ \rho^g_{nn} -  \rho^e_{nn}\big]^2  \geq 0,
\end{align}
thereby showing the first part of the first claim.

If $\rho = \ket{m}\bra{m}$ for some $m \in \NN$ with $0 < \alpha < 1$, then  \eqref{zg}--\eqref{ze} in Appendix~\ref{proof-properties-pk} imply that $\rho^g = \rho^e = \rho$, and so $Q_{W}(\rho,0)= 0$. Now, one shows that $\rho = \ket{m}\bra{m}$ for some $m \in \NN$ whenever $Q_{W}(\rho,0) = 0$.  Suppose $Q_{W}(\rho,0) = 0$. Then, \eqref{convexity} implies that $\alpha = 0$, or $\alpha = 1$, or $\rho^g_{nn} = \rho^e_{nn}$ for all $n \in \NN$  with $0 < \alpha < 1$. Assume that $\alpha = 0$. Hence, $\bM_g(0) \rho \bM_g^\dag(0) = \sum_{n \in \NN} \rho_{nn} \cos^2 ( \tfrac{\phi_0 n + \phi_R}{2}) \ket{n} \bra{n} = 0$ by \eqref{zg} in Appendix~\ref{proof-properties-pk}. Suppose that
$\rho \neq \ket{m} \bra{m}$ for every $m \in \NN$. Thus, there exists $n_1, n_2 \in \NN$ with $n_1 \neq n_2$, $\rho_{n_1,n_1} > 0$, $\rho_{n_2,n_2} > 0$.
Recall that $\sin(x_1) = \pm \sin(x_2)$ if and only if $x_1 + x_2 = \ell \pi$ or $x_2 - x_1 = \ell \pi$, where $\ell$ is an integer.
Therefore, $\sin ( \tfrac{\phi_0 n_1 + \phi_R}{2} ) = \pm \sin ( \tfrac{\phi_0 n_2 + \phi_R}{2} )$, which
contradicts the assumptions that $\phi_0/ \pi$ is an irrational number and $\phi_R=\pi/2 - \bar n \phi_0$. One has shown that $\rho = \ket{m}\bra{m}$ for some $m \in \NN$ whenever $\alpha = 0$. If $\alpha = 1$, or $\rho^g_{nn} = \rho^e_{nn}$ for all $n \in \NN$ with $0 < \alpha <1$, then from similar arguments and computations one also concludes that $\rho = \ket{m}\bra{m}$ for some $m \in \NN$.

\noindent \textbf{Proof of the second claim:}
Let $m \in \NN$ and take $\rho = \ket{m}\bra{m} \in D_*$. It is clear that $W(\rho)=-\sum_{n \in \NN } \rho_{nn}^2 = -1$. From \eqref{pg}--\eqref{me} in Appendix~\ref{proof-properties-pk}, one has that:
\begin{equation}\label{Mequations}
\begin{array}{l}
	\big( \bM_g(+1) \rho \bM_g^\dag(+1) \big)_{nn} = \delta(n,m+1) \sin^2 \left( \tfrac{\theta_0}{2}\sqrt{m+1} \right), \medskip \\
	\big( \bM_e(+1) \rho \bM_e^\dag(+1) \big)_{nn} = \delta(n,m) \cos^2 \left( \tfrac{\theta_0}{2}\sqrt{m+1} \right), \medskip \\
	\big( \bM_g(-1) \rho \bM_g^\dag(-1) \big)_{nn} =  \delta(n,m) \cos^2 \left( \tfrac{\theta_0}{2}\sqrt{m} \right), \medskip  \\
	\big( \bM_e(-1) \rho \bM_e^\dag(-1) \big)_{nn} = \delta(n+1,m) \sin^2 \left( \tfrac{\theta_0}{2}\sqrt{m} \right),
\end{array}
\end{equation}
where $\delta(n,m)$ is the usual Kronecker delta: $\delta(n,m) = 0$ if $n \neq m$, and $\delta(n,m)=1$ if $n=m$. In particular:
\begin{align*}
	& \tr{\bM_g(+1) \rho \bM_g^\dag(+1)} = \sin^2 \left( \tfrac{\theta_0}{2}\sqrt{m+1} \right), \\
	& \tr{\bM_e(+1) \rho \bM_e^\dag(+1)} = \cos^2 \left( \tfrac{\theta_0}{2}\sqrt{m+1} \right),\\
	& \tr{\bM_g(-1) \rho \bM_g^\dag(-1)} = \cos^2 \left( \tfrac{\theta_0}{2}\sqrt{m} \right), \\
	& \tr{\bM_e(-1) \rho \bM_e^\dag(-1)} = \sin^2 \left( \tfrac{\theta_0}{2}\sqrt{m} \right), \\
	& \displaystyle \sum\limits_{n \in \NN} \left(\hspace{-1pt} \dfrac{\bM_y(u) \rho \bM_y^\dag(u)}{\tr{\bM_y(u) \rho \bM_y^\dag(u)}} \hspace{-1pt} \right)^2_{nn} \hspace{-8pt} = 1, \mbox{ for } u=\pm 1, \;
	y=g,e
\end{align*}
(assuming no division by 0).
Now, using \eqref{eq:IdealMarkovModel} and the above computations, one gets
\begin{align*}
	& \EE{W(\rho_{k+1}) ~|~ \rho_k=\rho, u_k = \pm 1}  \\
	& = p_{g,k} W(\rho^{g}_{k+1}) +  p_{e,k} W(\rho^{e}_{k+1}) \smallskip \\
	& = - \displaystyle \sum_{y=g,e} \left[ \tr{\bM_y(\pm 1) \rho \bM_y^\dag(\pm 1)} \times \right. \\
	& \hspace{48pt} \times \left. \sum_{n \in \NN} \left( \dfrac{\bM_y(\pm 1) \rho \bM_y^\dag(\pm 1)}{\tr{\bM_y(\pm 1) \rho \bM_y^\dag(\pm 1)}} \right)^2_{nn} \right] \\
	& = -1 = W(\rho).
\end{align*}
Therefore, $Q_W(\ket{m}\bra{m},\pm1) = 0$.

\subsection{Proof of Proposition~\ref{propertiesDelta}}\label{proofDelta}

Fix $\rho \in \mathbb{D}_*$. Since $\tr{ d(\bN) \rho} = \tr{ d(\bN) \Delta \rho}$ and $\rho_{nn}=(\Delta \rho)_{nn}$ for $n \in \NN$,
the first two assertions are immediate from the definitions. As for the third and fourth assertions, let $\ket{\psi} = \sum_{m=0}^\infty \langle m | \psi  \rangle \ket{m} \in \mathcal{H}$. Note that
$\rho \ket{m} = \sum_{n=0}^{n_{max}(\rho)} \rho_{mn} \ket{n}$, for $m \in \NN$. Using \eqref{eq:M0}--\eqref{eq:Mm}:
\begin{align*}
	 & \bM_g(0) \rho \bM_g^\dag(0) \ket{\psi} \\
	 & = \displaystyle \sum_{m,n=0}^{n_{max}(\rho)}  \rho_{mn} \cos \left( \tfrac{\phi_0 m + \phi_R}{2} \right) \cos \left( \tfrac{\phi_0 n + \phi_R}{2} \right) \langle m | \psi  \rangle \ket{n}, \\
	 & \bM_e(0) \rho \bM_e^\dag(0) \ket{\psi} \\
	 & =  \displaystyle \sum_{m,n=0}^{n_{max}(\rho)}  \rho_{mn}   \sin \left( \tfrac{\phi_0 m + \phi_R}{2} \right) \sin \left( \tfrac{\phi_0 n + \phi_R}{2} \right) \langle m | \psi  \rangle \ket{n}, \\
	 & \bM_g(+1) \rho \bM_g^\dag(+1)\ket{\psi} \\
	 & = \hspace{-10pt} \displaystyle \sum\limits_{m=1,n=0}^{n_{max}(\rho)+1} \hspace{-10pt} \rho_{m-1,n} \sin \left( \tfrac{\theta_0}{2}\sqrt{m} \right) \sin \left( \tfrac{\theta_0}{2}\sqrt{n+1} \right) \langle m | \psi  \rangle \ket{n+1}, \\
	& \bM_e(+1) \rho \bM_e^\dag(+1) \ket{\psi} \\
	& =  \displaystyle \sum_{m,n=0}^{n_{max}(\rho)} \rho_{mn} \cos \left( \tfrac{\theta_0}{2}\sqrt{m+1} \right) \cos \left( \tfrac{\theta_0}{2}\sqrt{n+1} \right) \langle m | \psi  \rangle \ket{n}, \\
	& \bM_g(-1) \rho \bM_g^\dag(-1) \ket{\psi} \\
	& = \displaystyle \sum_{m,n=0}^{n_{max}(\rho)} \rho_{mn}  \cos  \left( \tfrac{\theta_0}{2}\sqrt{m} \right) \cos  \left( \tfrac{\theta_0}{2}\sqrt{n} \right) \langle m | \psi  \rangle \ket{n}, \\
	& \bM_e(-1) \rho \bM_e^\dag(-1) \ket{\psi} \\
	& =   \hspace{-8pt} \displaystyle \sum\limits_{m=0,n=1}^{n_{max}(\rho)}  \hspace{-8pt} \rho_{m+1,n}   \sin \left( \tfrac{\theta_0}{2}\sqrt{m+1} \right) \sin \left( \tfrac{\theta_0}{2}\sqrt{n} \right) \langle m | \psi  \rangle \ket{n-1}.
\end{align*}
Since $\Delta \rho \in D_* \subset \mathbb{D}_*$, $n_{max}(\Delta\rho) = n_{max}(\rho)$ and $(\Delta \rho)_{nn} = \rho_{nn}$, the proof is straightforward from \eqref{zg}--\eqref{me} in Appendix~\ref{proof-properties-pk}.

\bibliographystyle{plain}        

\begin{thebibliography}{1}

\bibitem{AminiSDSMR2013A}
H.~Amini, R.A. Somaraju, I.~Dotsenko, C.~Sayrin, M.~Mirrahimi, and P.~Rouchon.
\newblock Feedback stabilization of discrete-time quantum systems subject to
  non-demolition measurements with imperfections and delays.
\newblock {\em Automatica}, 49(9):2683--2692, September 2013.

\bibitem{brune-et-al:PhRevA92}
M.~Brune, S.~Haroche, J.-M. Raimond, L.~Davidovich, and N.~Zagury.
\newblock Manipulation of photons in a cavity by dispersive atom-field
  coupling: Quantum-nondemolition measurements and generation of
 ``{S}chr\"{o}dinger cat'' states.
\newblock {\em Physical Review A}, 45(7):5193--5214, 1992.

\bibitem{conway-book}
J.~B. Conway.
\newblock {\em A Course in Operator Theory}.
\newblock American Mathematical Society, 2000.

\bibitem{haroche-raimondBook06}
S.~Haroche and J.M. Raimond.
\newblock {\em Exploring the Quantum: Atoms, Cavities and Photons.}
\newblock Oxford University Press, 2006.

\bibitem{kushner-71}
H.J. Kushner.
\newblock {\em Introduction to Stochastic Control}.
\newblock Holt, Rinehart and Wilson, INC., 1971.

\bibitem{nielsen-chang-book}
M.A. Nielsen and I.L. Chuang.
\newblock {\em Quantum Computation and Quantum Information}.
\newblock Cambridge University Press, 2000.

\bibitem{reed-simon-1}
M.~Reed and B.~Simon.
\newblock {\em Methods of Modern Mathematical Physics: Functional Analysis
  (Vol. 1)}.
\newblock Academic Press, 1980.

\bibitem{zhouPRL2012}
X.~Zhou, I.~Dotsenko, B.~Peaudecerf, T.~Rybarczyk, C.~Sayrin, J.M.~Raimond
  S.~Gleyzes, M.~Brune, and S.~Haroche.
\newblock Field locked to {F}ock state by quantum feedback with single photon
  corrections.
\newblock {\em Physical Review Letter}, 108:243602, 2012.

\end{thebibliography}

\end{document}